\newcommand{\Q}[0]{\mathbb{Q}}
\newcommand{\Z}[0]{\mathbb{Z}}
\newcommand{\R}[0]{\mathbb{R}}
\DeclareMathOperator{\conv}{conv}
\DeclareMathOperator{\proj}{proj}
\DeclareMathOperator{\aff}{aff}
\DeclareMathOperator{\st}{s.t.}
\DeclareMathOperator{\depth}{depth}
\DeclareMathOperator{\volume}{volume}
\newtheorem{definition}{Definition}
\newtheorem{proposition}{Proposition}
\newtheorem{theorem}{Theorem}
\newtheorem{corollary}{Corollary}
\newtheorem{lemma}{Lemma}
\newcommand{\floor}[1]{\left\lfloor #1 \right\rfloor}
\newcommand{\ceil}[1]{\left\lceil #1 \right\rceil}
\newcommand{\vvec}[1]{\left[ \begin{array}{c} #1 \end{array} \right]}
\newcommand{\vmat}[2]{\left[ \begin{array}{#1} #2 \end{array} \right]}
\newcommand{\norm}[1]{{\left\lVert#1\right\rVert}}
\begin{document}

\title{On the depth of cutting planes}

\author{Laurent Poirrier, James Yu%
\thanks{Both authors are supported by NSERC Discovery grant RGPIN-2018-04335.}%
\\
%University of Waterloo, Waterloo ON \  N2L 3G1, Canada\\
\texttt{\small \{lpoirrier,jj6yu\}@uwaterloo.ca}%
}

%\authorrunning{L. Poirrier and J. Yu}

\maketitle

\begin{abstract}
We introduce a natural notion of depth that applies to individual cutting planes as well as entire families. This depth has nice properties that make it easy to work with theoretically, and we argue that it is a good proxy for the practical strength of cutting planes. In particular, we show that its value lies in a well-defined interval, and we give parametric upper bounds on the depth of two prominent types of cutting planes: split cuts and intersection cuts from a simplex tableau. Interestingly, these parametric bounds can help explain behaviors that have been observed computationally. For polyhedra, the depth of an individual cutting plane can be computed in polynomial time through an LP formulation, and we show that it can be computed in closed form in the case of corner polyhedra.
\end{abstract}

\section{Introduction}
Cutting planes (also called valid inequalities or cuts) have proven computationally useful since the very early history of Integer Programming (IP). In the pioneering work of Dantzig, Fulkerson and Johnson~\cite{DantzigFulkersonJohnson1954} on the traveling salesman problem, cutting planes were the main computational tool alongside the simplex method. They were initially less successful for general IP problems, where they were largely outperformed by Land and Doig's branch-and-bound method~\cite{LandDoig1960}. However, in the mid-nineties, Balas, Ceria and Cornuéjols~\cite{BalasCeriaCornuejols1996} proposed combining the two: first adding a few rounds of cuts, before resorting to branch-and-bound. Since then, cutting planes have become a central component of IP solvers~\cite{Bixby2012}, the most effective single families of cuts being Gomory's Mixed-Integer (GMI) cuts~\cite{Gomory1960}, and Nemhauser and Wolsey's Mixed-Integer Rounding (MIR) cuts~\cite{NemhauserWolsey1988,NemhauserWolsey1990}.

In theory, both GMI and MIR cuts are equivalent~\cite{CornuejolsLi2000} to the split cuts of Cook, Kannan and Schrijver~\cite{CookKannanSchrijver1990}. Let $P$ be a polyhedron in $\R^n$ and let $S$ be the set of its integer points, i.e. $S := P \cap \Z^n$. Consider the integer vector $\pi \in \Z^n$ and the integer $\pi_0 \in \Z$. Clearly, any point $x \in S$ satisfies either $\pi^T x \leq \pi_0$ or $\pi^T x \geq \pi_0 + 1$. A split cut for $P$ is any inequality that is valid for $P^{(\pi, \pi_0)} := \conv(\{ x \in P \; : \; \pi^T x \leq \pi_0 \} \cup \{ x \in P \; : \; \pi^T x \geq \pi_0 + 1 \})$. The split-cut view perfectly illustrates one major issue with cutting planes for general IPs: cut selection. Split cuts are parametrized on $(\pi, \pi_0)$, yielding a huge family of cutting planes. One must choose, among those, a subset that is computationally useful: cuts that yield a reduction in the number of branch-and-bound nodes, without making the formulation too much larger and slowing down the simplex method. This motivates the need for some (possibly heuristic) measure of cut \emph{strength}.

A recent survey by Dey and Molinaro~\cite{DeyMolinaro2018} provides an excellent summary of the current state of the cut selection problem. The survey emphasizes how crucial cut selection is in the implementation of integer programming solvers, but highlights one issue: While the strength of \emph{families} of cuts is well covered in the literature, and many ad-hoc cut evaluation methods exist in practice, there is a dearth of \emph{applicable} theory. In particular, theoretical approaches do not usually allow us to evaluate the strength of individual cuts. As the authors note, ``when it comes to cutting-plane use and selection our scientific understanding is far from complete.'' We only present here a brief introduction to the topic.

The most straightforward notion of strength is based on \emph{closures} and \emph{rank}. We illustrate these concepts with splits, although they apply more generally. The first split closure $P^1$ is the intersection of all split cuts for the formulation $P$, i.e. $P^1 := \bigcap_{(\pi, \pi_0) \in \Z^n \times \Z} P^{(\pi, \pi_0)}$. The process can be iterated, and the $k$th split closure $P^k$ is the first split closure of $P^{k-1}$. A valid inequality for $S$ has rank $k$ if it is valid for $P^k$ but not for $P^{k-1}$. Similarly, the set $\conv(S)$ itself is said to have rank $k$ if $\conv(S) = P^k$ but $\conv(S) \subsetneq P^{k-1}$. From a computational perspective, it is easy to realize how cuts with higher split rank can be seen as stronger. However, this measure is very coarse: One cannot discriminate between, say, cuts of rank one. Furthermore, while the rank is a very powerful theoretical tool, it is impractical: For arbitrary cuts, the split rank is hard to compute exactly\footnote{Given a valid inequality $\alpha^T x \geq \beta$, just determining whether it has rank one is $\mathcal{NP}$-complete already~\cite{CapraraLetchford2003}}, and for split cuts computed by traditional means (successive rounds of cuts), a high rank is paradoxically counter-productive, because numerical errors accumulate with each round.

The other natural candidate for measuring the strength of a valid inequality $\alpha^T x \geq \beta$ is the volume that it cuts off, i.e. $\volume(\{ x \in P : \alpha^T x < \beta\})$. This volume is easy to compute in some important cases (e.g., when $P$ is a cone), but it is more complex to work with in theory. Moreover, comparing volumes presents difficulties. In $\R^n$, volumes can be zero or infinite for entirely legitimate cuts. Zero cases can be mitigated by projecting down to the affine hull of $P$, and infinities by projecting out variables. However, we then obtain incomparable volumes computed in different dimensions. This is not just a theoretical issue. Practical cuts are typically sparse, and if $P$ is a cone, any zero coefficient in the cut will add one unbounded ray to the volume cut off.

On the computational side, a common approach is to evaluate the quantity
\[
\frac{\beta - \alpha^T x^*}{||\alpha||_2}
\]
at a point $x^*$ that we wish to separate, e.g. an LP optimal solution.
This measure has been variously called \emph{(Euclidean) distance}~\cite{WesselmannSuhl2012}, \emph{steepness}~\cite{CookFukasawaGoycoolea2006}, \emph{efficacy}~\cite{FischettiSalvagnin2013} and \emph{depth-of-cut}~\cite{DeyMolinaro2018}. Its limitations are well documented~\cite{DeyMolinaro2018}, but it remains the primary quality measure in academic codes (we can only speculate about what commercial solvers use). For more details, we refer the reader to Wesselmann and Suhl~\cite{WesselmannSuhl2012}, who provide a nice survey of the alternatives (including the \emph{rotated distance} of Cook, Fukasawa and Goycoolea~\cite{CookFukasawaGoycoolea2006}), and computational evaluations.

As an alternative, we propose a measure of strength that we call the \emph{depth} of a cut. Informally, it can be defined as follows. First, the depth of \emph{a point} in $P$ is the distance between this point and the boundary of $P$. Then, the depth of \emph{a cut} is the largest depth of a point that it cuts off. In Section~\ref{sect:depth}, we establish the basic properties of cut depth. In particular, the depth is always positive when the cut is violated, and we show that it is always finite when $S$ is nonempty. Moreover, it is at most $\sqrt{\frac{n+1}{2}}$ when $P$ is full-dimensional, and this bound is tight up to a multiplicative constant. In Section~\ref{sect:bounds}, we apply this concept to give upper bounds on the depth of split cuts. When $P$ is full-dimensional, the upper bound is at most one, emphasizing a large gap between rank-1 split cuts (at most $1$) and the integer hull of $P$ (at most $\sqrt{\frac{n+1}{2}}$). In general, the bound is proportional to the inverse of $||\pi||$. This provides a nice theoretical justification to recent empirical observations that split cuts based on disjunctions with small coefficients are most effective~\cite{FischettiSalvagnin2013,FukasawaPoirrierYang2018}. We then provide a simple bound for intersection cuts from a simplex tableau, which also lines up with some recent computational results. In Section~\ref{sect:algo}, we devise an exact algorithm for computing the depth of an arbitrary cut, which consists in solving a linear programming problem of the same size as the original problem. While this is great in theory, solving one LP per candidate cut could be considered too expensive, still, to warant inclusion into general-purpose IP solvers. However, if we consider corner polyhedra and compute cut depth with respect to their LP relaxation (a simple pointed polyhedral cone), we obtain a much cheaper procedure. We conclude with a few open questions and conjectures in Section~\ref{sect:conclusion}.

\section{Depth}
\label{sect:depth}

We start with a formal definition for the depth of a point, which relies on a uniform notation for balls.
Note that we use $|| \cdot ||_k$ to denote the $L^k$ norm, and that $|| \cdot ||$ indicates the $L^2$ norm when $k$ is omitted.

\begin{definition} We define $B^k(x, r)$ as the ball of radius $r \in \R_+$ in norm $L^k$ centered in $x \in \R^n$, i.e. $B^k(x, r) := \{ y \in \R^n : || y - x ||_k \leq r \}$. When $k$ is omitted, $B(x, r)$ implicitly uses the $L^2$-norm.
\label{def:ball}
\end{definition}
\begin{definition} Let $P \in \R^n$ and $x \in P$.
We define the depth of $x$ with respect to $P$ as
\[
\depth_P(x) := \sup \{ r \in \R_+ : B(x, r) \cap \aff(P) \subseteq P \}.
\]
\label{def:point-depth}
\end{definition}
Observe that, in Definition~\ref{def:point-depth}, we intersect $B(x, r)$ with the affine hull of $P$ in order to properly handle the case in which $P$ is not full-dimensional. Otherwise, the depth would always be zero whenever $\dim(P) < n$. We can now introduce the depth of any subset $S$ of $P$.
\begin{definition} Let $P, S \in \R^n$ be such that $S \subseteq P$.
We define the depth of $S$ with respect to $P$ as
\[
\depth_P(S) := \sup \{ \depth_P(x) : x \in P \setminus S \}.
\]
By extension, we define the depth of an inequality $\alpha^T x \geq \beta$ that is valid for $P \cap \Z^n$ as
\[
\depth_P(\alpha, \beta) := \depth_P(\{ x \in P : \alpha^T x \geq \beta \}).
\]
\label{def:set-depth}
\end{definition}
This bears resemblance with the notion of distance introduced by Dey, Molinaro and Wang~\cite{DeyMolinaroWang2015}, $d(S, P) := \max_{x \in P} \min_{y \in S} || x - y ||_2$. The measures are distinct, but it is easy to show that $\depth_P(S) \leq d(S, P)$ for all $S \subseteq P \subseteq \R^n$. We now show that in the full-dimensional case, 
the depth of a cutting plane is a lower bound on the volume that it cuts off.

\begin{proposition}
Let $P \in \R^n$ be a full-dimensional convex set, let $\alpha^T x \geq \beta$ be
an inequality, and let $V_{\text{cut}}(\alpha, \beta)$ be the volume
of $\{ x \in P : \alpha^T x < \beta \}$. Then,
\[
V_{\text{cut}}(\alpha, \beta) \geq
	\frac{1}{2} V_n(\depth_P(\{ x \in P : \alpha^T x \geq \beta \}))
\]
where $V_n(r)$ is the volume of a ball of radius $r$ in $\R^n$.
Note that, for example,
$V_n(r) = \frac{\pi^{n/2}}{(n/2)!} r^n$ when $n$ is even.
\label{prop:volume}
\end{proposition}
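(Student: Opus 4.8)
The plan is to exploit full-dimensionality, which gives $\aff(P) = \R^n$, so that $\depth_P(x)$ for a point $x \in P$ is simply the largest radius $r$ with $B(x, r) \subseteq P$. Write $S := \{ x \in P : \alpha^T x \geq \beta \}$ for the part of $P$ retained by the cut, so that $P \setminus S = \{ x \in P : \alpha^T x < \beta \}$ is precisely the region whose volume is $V_{\text{cut}}(\alpha, \beta)$, and set $d := \depth_P(S) = \sup \{ \depth_P(x) : x \in P, \, \alpha^T x < \beta \}$. We may assume $\alpha \neq 0$, since otherwise the inequality does not define a genuine cutting plane. The geometric heart of the argument is the observation that a hyperplane passing through the center of a Euclidean ball splits it into two congruent halves of equal volume.

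Concretely, I would first fix $\epsilon > 0$ and, using the definition of $d$ as a supremum, select a point $x \in P$ with $\alpha^T x < \beta$ and $\depth_P(x) > d - \epsilon$. Because $P$ is full-dimensional, this yields $B(x, d - \epsilon) \subseteq P$. Next I would consider the half-ball
\[
H_\epsilon := \{ y \in B(x, d - \epsilon) : \alpha^T y \leq \alpha^T x \}.
\]
Every $y \in H_\epsilon$ lies in $P$ (since the whole ball does) and satisfies $\alpha^T y \leq \alpha^T x < \beta$, hence $H_\epsilon \subseteq \{ y \in P : \alpha^T y < \beta \}$. The hyperplane $\{ y : \alpha^T y = \alpha^T x \}$ passes through the center $x$ of the ball, so reflection across it maps $B(x, d-\epsilon)$ onto itself and interchanges the two halves; thus $H_\epsilon$ has volume exactly $\tfrac{1}{2} V_n(d - \epsilon)$.

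Combining these, $V_{\text{cut}}(\alpha, \beta) \geq \volume(H_\epsilon) = \tfrac{1}{2} V_n(d - \epsilon)$ for every $\epsilon > 0$. Letting $\epsilon \to 0$ and using the continuity of $r \mapsto V_n(r)$ gives the claimed bound $V_{\text{cut}}(\alpha, \beta) \geq \tfrac{1}{2} V_n(d)$. I expect the only delicate points to be bookkeeping ones rather than conceptual: first, that $\depth_P(S)$ is in general attained only in the limit, so the $\epsilon$-approximation and the final passage to the limit are needed to avoid assuming a maximizer exists; and second, the justification that $H_\epsilon$ captures exactly half the ball's volume, which follows cleanly from the reflection symmetry once one notes that the bounding hyperplane has measure zero, so the closed and open half-balls have the same volume.
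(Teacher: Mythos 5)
Your proof is correct and follows essentially the same route as the paper: both arguments place a ball of (nearly) maximal depth at a cut-off point and observe that the half-ball $\{ y \in B(x,r) : \alpha^T y \leq \alpha^T x \}$ lies entirely in the region $\{ y \in P : \alpha^T y < \beta \}$. In fact, your write-up is slightly more complete than the paper's, which leaves implicit both the $\epsilon$-approximation of the supremum and the reflection-symmetry justification that the half-ball has volume exactly $\tfrac{1}{2} V_n(r)$.
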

\begin{proof}
Let $S := \{ x \in P : \alpha^T x \geq \beta \}$,
$x' \in P \setminus S$, and $r > 0$ be such that
$B^2(x', r) \subseteq P$. We build the half-ball
$H := \{ x \in B^2(x', r) \; : \; \alpha^T x \leq \alpha^T x' \}$.
Since $\alpha^T x' < \beta$,
we have that $H \subseteq \{ x \in B^2(x', r) \; : \; \alpha^T x < \beta \}
\subseteq \{ x \in P \; : \; \alpha^T x < \beta \}$.
In other words, $H$ is included in the set that is cut off
from $P$ by $\alpha^T x \geq \beta$.
\end{proof}

Clearly, an upper bound on such depth for any valid inequality is the
depth of the integer hull of $P$, i.e. $\depth_P(\conv(P \cap \Z^n))$.
We now give a first upper bound on the latter.

\begin{proposition} Let $P \subseteq \R^n$ be a full-dimensional convex set.
The depth of the integer hull of $P$ is at most
$\sqrt{n}$.
\label{prop:depth-ub}
\end{proposition}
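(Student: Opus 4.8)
The plan is to reduce the statement to a covering-radius property of $\Z^n$. Since $P$ is full-dimensional, $\aff(P) = \R^n$, so for any $x \in P$ the quantity $\depth_P(x)$ is simply the radius of the largest Euclidean ball centered at $x$ that is contained in $P$. Thus it suffices to prove the implication: if $B(x, r) \subseteq P$ and $r > \sqrt{n}$, then $x \in \conv(P \cap \Z^n)$. Indeed, this implication shows that every $x \in P \setminus \conv(P \cap \Z^n)$ admits no contained ball of radius exceeding $\sqrt{n}$, whence $\depth_P(x) \leq \sqrt{n}$; taking the supremum over all such $x$ bounds $\depth_P(\conv(P \cap \Z^n))$ by $\sqrt{n}$.

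The key tool is that the covering radius of $\Z^n$ in the $L^2$ norm is $\sqrt{n}/2$: rounding each coordinate of any $w \in \R^n$ to a nearest integer produces an integer point $p$ with $\norm{p - w}_\infty \leq 1/2$, hence $\norm{p - w} \leq \sqrt{n}/2$. Note also that whenever $B(x, r) \subseteq P$ we have $\Z^n \cap B(x, r) \subseteq P \cap \Z^n$, so $\conv(\Z^n \cap B(x, r)) \subseteq \conv(P \cap \Z^n)$. Consequently, the implication above follows once I show that $r > \sqrt{n}$ forces $x \in \conv(\Z^n \cap B(x, r))$.

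I would establish this last claim by contradiction. The set $\Z^n \cap B(x, r)$ is finite, so if $x$ were not in its convex hull, the separating hyperplane theorem would give a unit vector $d$ with $d^T(p - x) > 0$ for every integer point $p \in B(x, r)$. To contradict this, set $s := r - \sqrt{n}/2$ and $w := x - s\, d$, and let $p$ be an integer point with $\norm{p - w} \leq \sqrt{n}/2$. The triangle inequality gives $\norm{p - x} \leq s + \sqrt{n}/2 = r$, so $p \in B(x, r)$, while $d^T(p - x) = d^T(p - w) - s \leq \sqrt{n}/2 - s = \sqrt{n} - r < 0$, contradicting the choice of $d$.

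The only real obstacle is calibrating the step length $s$: the rounded point $p$ must land on the far side of the separating hyperplane (requiring $s > \sqrt{n}/2$) while still remaining inside $B(x, r)$ (requiring $s \leq r - \sqrt{n}/2$), and both constraints can be met precisely when $r > \sqrt{n}$. Everything else is routine bookkeeping: full-dimensionality to identify $\aff(P)$ with $\R^n$, the containment $\Z^n \cap B(x, r) \subseteq P \cap \Z^n$ to pass from the lattice points in the ball to the integer hull, and the covering-radius estimate to control the rounding error.
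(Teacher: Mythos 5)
Your proof is correct, but it runs on a genuinely different mechanism than the paper's. The paper's proof is a two-line direct construction: if $B^2(x,r) \subseteq P$ with $r > \sqrt{n}$, then $B^\infty(x,1) \subseteq B^2(x,\sqrt{n}) \subseteq P$, so \emph{all} $2^n$ coordinate roundings of $x$ are integer points of $P$, and $x$ lies in their convex hull (it is a point of the box they span), contradicting $x \notin \conv(P \cap \Z^n)$. You prove the same key implication --- $B(x,r) \subseteq P$ with $r > \sqrt{n}$ forces $x \in \conv(\Z^n \cap B(x,r))$ --- non-constructively, by combining the covering radius $\sqrt{n}/2$ of $\Z^n$ with a separating-hyperplane contradiction: shift $x$ by $s = r - \sqrt{n}/2$ against the purported separating direction $d$ and round the shifted point. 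Your calibration is right: $\norm{p - x} \le s + \sqrt{n}/2 = r$ puts $p$ in the ball, while $d^T(p-x) \le \sqrt{n}/2 - s = \sqrt{n} - r < 0$ contradicts separation; the edge case where $\Z^n \cap B(x,r)$ is empty is harmless since you exhibit a lattice point in the ball anyway. As for what each approach buys: the paper's explicit-witness argument is shorter, needs no separation theorem, and is exactly the template that gets refined later --- Lemma~\ref{lemma:X} replaces the box of roundings by a cleverer lattice polytope to reach $\sqrt{\frac{n+1}{2}}$ in Theorem~\ref{thm:depth-ub}. Your covering-radius argument needs only one well-placed lattice point rather than all $2^n$ roundings, and it generalizes verbatim to an arbitrary lattice $\Lambda$, giving a bound of twice the covering radius of $\Lambda$ --- conceptually close to the paper's corollary for non-full-dimensional rational polyhedra, which the paper instead handles through eigenvalue bounds on $L^TL$. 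The trade-off is that for $\Z^n$ the twice-covering-radius bound is stuck at exactly $\sqrt{n}$, so your route cannot by itself be sharpened to the paper's $\sqrt{\frac{n+1}{2}}$.
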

\begin{proof}
Having $\depth_P(\conv(P \cap \Z^n)) > \sqrt{n}$ would imply that there exist
$x \in P \setminus \conv(P \cap \Z^n)$ such that $B^2(x, r) \subseteq P$
with $r > \sqrt{n}$.
We then have $B^\infty(x, 1) \subseteq B^2(x, r) \subseteq P$, which means
that all $2^k$ integer roundings of $x$ are in $P$, hence
contradicting $x \notin \conv(P \cap \Z^n)$.
\end{proof}

We mention the $\sqrt{n}$ upper bound from Proposition~\ref{prop:depth-ub} because it is intuitive and easy to obtain, but this result can be strengthened, and we do so in Theorem~\ref{thm:depth-ub}. In order to prove it, we first establish the following lemma.

\begin{lemma} Let $n \geq 2$. For any point $y \in \R^n$,
there exists a lattice polytope
$X \subseteq \R^n$ such that
$y \in X \subset B\left(y, \sqrt{\frac{n+1}{2}}\right)$.
\label{lemma:X}
\end{lemma}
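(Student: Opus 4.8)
The plan is to recast the lemma as a statement about separating hyperplanes and then to exhibit, for each direction, a single nearby lattice point on the correct side. Concretely, I would first argue that it suffices to prove the following claim: for every unit vector $d \in \R^n$ there is a lattice point $p \in \Z^n$ with $d^T(p - y) \ge 0$ and $\norm{p - y} < \sqrt{\frac{n+1}{2}}$. Indeed, the set $T := \Z^n \cap B(y, \sqrt{\frac{n+1}{2}})$ is finite, so if $y \notin \conv(T)$ there would be a unit vector $d$ with $d^T(p - y) < 0$ for all $p \in T$; the claim rules this out, whence $y \in \conv(T)$. Taking $X := \conv(T)$ then gives a lattice polytope with $y \in X$, and since $T$ lies in the convex open ball $B(y, \sqrt{\frac{n+1}{2}})$ we obtain $X \subset B(y, \sqrt{\frac{n+1}{2}})$, as required.

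To produce the point $p$, I would track the rounding of $y$ as it is pushed in the direction $d$. For $t \ge 0$ let $p(t)$ be a lattice point obtained by rounding each coordinate of $y + t d$ to a nearest integer, with ties broken by a fixed lexicographic rule so that, as $t$ increases, $p(t)$ changes one coordinate at a time. Each such change happens when a coordinate $y_j + t d_j$ crosses a half-integer, and it moves $p(t)_j$ by $\mathrm{sign}(d_j)$, so $d^T(p(t) - y)$ is a nondecreasing step function whose jumps equal $|d_j| \le \norm{d}_\infty \le 1$. Since $d^T(p(t) - y) = d^T\!\left(p(t) - (y+td)\right) + t \ge t - \tfrac{\sqrt n}{2}$, this function is eventually positive; I let $t^*$ be the first $t$ at which it becomes nonnegative and set $p := p(t^*)$. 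By construction $0 \le d^T(p - y) < 1$, because the value was negative just before the final jump and that jump is at most $1$; and because $p$ rounds $y' := y + t^* d$ coordinatewise, $\norm{p - y'}^2 \le n/4$.

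The estimate then falls out of a single expansion. Writing $p - y = (p - y') + t^* d$ and using $\norm{d} = 1$ together with $d^T(p - y') = d^T(p - y) - t^*$, I get
\[
\norm{p - y}^2 = \norm{p - y'}^2 + 2 t^*\, d^T(p - y) - (t^*)^2 \le \frac{n}{4} + \left(2 t^* - (t^*)^2\right) \le \frac{n}{4} + 1,
\]
where the first inequality uses $\norm{p-y'}^2 \le n/4$ and $d^T(p-y) < 1$ (and is strict once $t^* > 0$; the case $t^* = 0$ gives $\norm{p-y}^2 \le n/4$ directly), and the second uses $2 t^* - (t^*)^2 = 1 - (1 - t^*)^2 \le 1$. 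Since $\frac{n}{4} + 1 \le \frac{n+1}{2}$ exactly when $n \ge 2$, this is precisely the regime of the lemma and yields the required strict containment in the ball.

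The step I expect to need the most care is the bookkeeping at simultaneous crossings: when several coordinates of $y + td$ reach half-integers at the same $t$, I must flip them one at a time via the lexicographic tie-break, so that each jump of $d^T(p(t)-y)$ is bounded by a single $|d_j| \le 1$ rather than by $\norm{d}_1$; this is what keeps $d^T(p - y) < 1$ and hence preserves the constant. The remaining points — that $t^*$ is finite, that the rounding bound $\norm{p - y'}^2 \le n/4$ survives the tie-breaking (each boundary coordinate stays at distance exactly $1/2$), and that the final inequality is strict so that $X$ lands in the open ball — are routine once the crossing order is fixed. The clean appearance of the constant is no accident: it is the identity $2 t^* - (t^*)^2 \le 1$ that sharpens the crude $\sqrt{n}$-type bound underlying Proposition~\ref{prop:depth-ub} into $\sqrt{\frac{n+1}{2}}$.
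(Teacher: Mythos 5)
Your proof is correct, but it takes a genuinely different route from the paper's. The paper argues by explicit construction: after translating and reflecting so that $y \in [0,\tfrac12]^n$, it takes $X = \{x : 0 \le x \le 1,\ 1^T x \le \ceil{1^T y}\}$, whose vertices are integral by total unimodularity, and bounds the maximum distance between $[0,\tfrac12]^n$ and $X$ by a small discrete optimization over vertex pairs, obtaining $\sqrt{\tfrac{n}{2} + \tfrac{5-5\rho}{12}} < \sqrt{\tfrac{n+1}{2}}$. You instead take $X$ to be the hull of \emph{all} nearby lattice points and prove $y \in X$ by strict separation, exhibiting for each unit direction $d$ a lattice point on the far side via a monotone rounding walk along $y + td$. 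Two small repairs are needed: (i) with the paper's convention that $B$ is a \emph{closed} ball, you should define $T$ using the open ball (a lattice point at distance exactly $\sqrt{\tfrac{n+1}{2}}$ would otherwise break the strict containment $X \subset B$); this costs nothing since your claim produces points satisfying a strict inequality; and (ii) the ``one coordinate at a time'' device cannot literally be a tie-break rule for a function $p(t)$ of $t$ alone --- at a simultaneous crossing you must pass through a finite sequence of intermediate lattice points, each still a valid rounding of $y+td$ because the crossing coordinates sit exactly at half-integers --- but you flagged precisely this step and your fix is sound. What your route buys is substantial: your estimate $\norm{p-y}^2 < \tfrac{n}{4} + 1$ actually proves the lemma with the smaller radius $\sqrt{\tfrac{n}{4}+1}$, which improves on the paper's bound for $n \ge 3$ and, propagated through Theorem~\ref{thm:depth-ub}, nearly matches the lower bound $\tfrac{\sqrt{3+n}}{2} = \sqrt{\tfrac{n}{4}+\tfrac34}$ of Theorem~\ref{thm:depth-lb}; this essentially settles, affirmatively, the conjecture in Section~\ref{sect:conclusion} that Lemma~\ref{lemma:X} can be strengthened and that the lower bound is tight up to an additive (indeed vanishing) term. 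What the paper's route buys, by contrast, is a fully explicit, self-contained polytope and an exactly computed constant for that construction.
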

\begin{proof}
As we can translate the problem by arbitrary integers, we assume without loss of generality that $0 \leq y \leq 1$. Furthermore, by using reflections (i.e., replacing $y_j$ by $(1 - y_j)$ for the appropriate indices $j$), we assume wlog that $y \in Y$, with
$Y := \{ y \in \R^n \, : \, 0 \leq y \leq \frac{1}{2} \}$. Consider the set
$X := \{ x \in \R^n \, : \, 0 \leq x \leq 1 \text{ and } 1^T x \leq \ceil{1^T y} \}$.
Clearly, $y \in X$. Moreover, the matrix defining $X$ is totally unimodular, so all its vertices are integral, and $X$ can be equivalently rewritten
$X = \conv\{ x \in \{ 0, 1 \}^n \, : \, 1^T x \leq \ceil{1^T y} \}$.
We now establish an upper bound on the Euclidean distance between $y$ and any point of $X$. To that end, we look for a pair $\bar y \in Y$, $\bar x \in X$ that maximizes that distance $||\bar y- \bar x||$. Observe that because $X$ and $Y$ are polyhedra, there exists at least one maximizer in which
$\bar x$ is a vertex of $X$ and $\bar y$ is a vertex of $Y$. We can thus write that $||\bar y - \bar x||^2$ is given by
\begin{equation}
\begin{array}{rll}
\max & \sum_j (y_j - x_j)^2 & \\
\st  & y_j \in \{ 0, \frac{1}{2} \} & \text{ for all } j \\
 & x_j \in \{ 0, 1 \} & \text{ for all } j \\
 & 1^T x \leq \ceil{1^T y}. &
\end{array}
\label{eq:depth-ub-opt}
\end{equation}
Observe that the terms $(y_j - x_j)^2$, in the objective function, can only take one of the discrete values $\{ 0, \frac{1}{4}, 1 \}$. We say that term $j$ is a $v$-term if $(y_j - x_j)^2 = v$. For $1$-terms, we have $y_j = 0$ and $x_j = 1$. For $0$-terms, we have $y_j = x_j = 0$.
For every $\frac{1}{4}$-term, we have $y_j = \frac{1}{2}$ and we know that $x_j = 0$ in some optimal solution, since using that value does not affect the $1^T x \leq \ceil{1^T y}$ constraint. Given that restriction, a pair $(x, y)$ is feasible if and only if it satisfies the following condition: if we have $k \geq 1$ $1$-terms, then we need at least $(2k - 1)$ $\frac{1}{4}$-terms in order to satisfy $\sum_j x_j \leq \ceil{\sum_j y_j}$. We can thus construct an optimal solution by greedily maximizing the number of $1$-terms, then the number of $\frac{1}{4}$-terms. We have $n$ terms in total, so
\begin{eqnarray*}
k + (2k - 1) & \leq & n \\
k & \leq & \frac{n+1}{3} \\
k & = & \floor{\frac{n+1}{3}}.
\end{eqnarray*}
We obtain
\begin{equation}
\left\{ \begin{array}{lll}
	\bar y_j = 0 \text{ and } \bar x_j = 1,
		& \text{for } j \in J^1,
		& \text{where } |J^1| = \floor{\frac{n+1}{3}} \\
	\bar y_j = \frac{1}{2} \text{ and } \bar x_j = 0,
		& \text{for } j \in J^{\frac{1}{4}},
		& \text{where } |J^{\frac{1}{4}}| = n - \floor{\frac{n+1}{3}}.
\end{array} \right.
\label{eq:depth-ub-cons}
\end{equation}
It follows that
\[
|| \bar y - \bar x ||^2
= \floor{\frac{n+1}{3}} + \frac{1}{4} \left( n - \floor{\frac{n+1}{3}} \right)
= \floor{\frac{n+1}{3}} + \frac{n}{4} - \frac{1}{4} \floor{\frac{n+1}{3}}.
\]
Let $\rho \in \{ 0, 1, 2 \}$ be such that
$\rho = (n + 1) \pmod{3}$. We get
\begin{eqnarray*}
|| \bar y - \bar x ||^2
 & = & \frac{n+1-\rho}{3} + \frac{n}{4} - \frac{1}{4} \cdot \frac{n+1-\rho}{3} \\
 & = & \frac{1}{12} \left( 4n+4-4\rho + 3n - n+1-\rho \right) \\
 & = & \frac{n}{2} + \frac{5 - 5 \rho}{12},
\end{eqnarray*}
so the maximum distance is
\[
|| \bar y - \bar x || = \sqrt{\frac{n}{2} + \frac{5 - 5 \rho}{12}}
< \sqrt{\frac{n+1}{2}}.
\]
Therefore, for any $y \in \R^n$, we can construct a set $X$ such that
$X$ is a lattice polytope, $y \in X$,
and $X \subset B\left(y, \sqrt{\frac{n+1}{2}}\right)$.
\end{proof}

\begin{theorem} Let $n \geq 2$
and let $P \subseteq \R^n$ be a full-dimensional convex set.
The depth of the integer hull of $P$ is less than $\sqrt{\frac{n+1}{2}}$.
\label{thm:depth-ub}
\end{theorem}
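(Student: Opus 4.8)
The plan is to bound $\depth_P(x)$ for each individual point $x \in P \setminus \conv(P \cap \Z^n)$ and then pass to the supremum that defines $\depth_P(\conv(P \cap \Z^n))$. Since $P$ is full-dimensional, $\aff(P) = \R^n$, so $\depth_P(x) = \sup\{ r \in \R_+ : B(x,r) \subseteq P \}$. I would first record the elementary observation that the set $\{ r : B(x,r) \subseteq P\}$ is downward closed (if $B(x,r) \subseteq P$ and $s \leq r$ then $B(x,s) \subseteq B(x,r) \subseteq P$), so it is an interval with supremum $\depth_P(x)$. Consequently, for any $s < \depth_P(x)$ we genuinely have $B(x,s) \subseteq P$, without needing $P$ to be closed.

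Next I would fix an arbitrary $x \in P \setminus \conv(P \cap \Z^n)$ and apply Lemma~\ref{lemma:X} with $y = x$, obtaining a lattice polytope $X$ with $x \in X \subset B\!\left(x, \sqrt{\frac{n+1}{2}}\right)$. Set $r^* := \max_{z \in X} \|z - x\|$, which is attained because $X$ is compact. The crucial ingredient, which I would read off from the \emph{proof} of Lemma~\ref{lemma:X} rather than its bare statement, is that every vertex $z$ of $X$ satisfies $\|z - x\| \leq \sqrt{\frac{n}{2} + \frac{5 - 5\rho}{12}}$ with $\rho = (n+1) \bmod 3$, and hence so does every $z \in X$ by convexity of the ball. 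Writing $c_n := \sqrt{\frac{n}{2} + \frac{5 - 5\rho}{12}}$, this gives $r^* \leq c_n < \sqrt{\frac{n+1}{2}}$, where $c_n$ depends only on $n$ and not on $x$.

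With these pieces in place I would argue by contradiction that $\depth_P(x) \leq r^*$. If instead $\depth_P(x) > r^*$, then by the first paragraph $B(x, r^*) \subseteq P$, and since $X \subseteq B(x, r^*)$ we get $X \subseteq P$. As $X$ is a lattice polytope, its vertices lie in $\Z^n$, and being contained in $P$ they lie in $P \cap \Z^n$; therefore $X = \conv(\text{vertices of } X) \subseteq \conv(P \cap \Z^n)$. But $x \in X$, contradicting $x \notin \conv(P \cap \Z^n)$. Hence $\depth_P(x) \leq r^* \leq c_n$ for every such $x$.

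Finally, taking the supremum over all $x \in P \setminus \conv(P \cap \Z^n)$ yields $\depth_P(\conv(P \cap \Z^n)) \leq c_n < \sqrt{\frac{n+1}{2}}$, the claimed strict bound. The main obstacle is obtaining strictness: the black-box statement of Lemma~\ref{lemma:X} only gives $\depth_P(x) \leq \sqrt{\frac{n+1}{2}}$ for each $x$, and a supremum of quantities each strictly below a threshold can still equal that threshold. Closing this gap requires the \emph{uniform} constant $c_n$ extracted from the lemma's proof, which is bounded away from $\sqrt{\frac{n+1}{2}}$ independently of $x$. A secondary, purely technical point is the supremum-versus-membership issue resolved in the first paragraph, which lets me assert $B(x, r^*) \subseteq P$ from the strict inequality $\depth_P(x) > r^*$.
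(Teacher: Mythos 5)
Your proof is correct, and it rests on exactly the same key ingredient as the paper's: apply Lemma~\ref{lemma:X} at a point $x \in P \setminus \conv(P \cap \Z^n)$, conclude that the resulting lattice polytope $X$ sits inside $P$, and derive the contradiction $x \in \conv(P \cap \Z^n)$. The difference is in how the supremum in Definition~\ref{def:set-depth} is handled. The paper argues by contradiction at the level of the set depth: from $\depth_P(\conv(P \cap \Z^n)) \geq \sqrt{\frac{n+1}{2}}$ it extracts a point $y$ with $B\left(y, \sqrt{\frac{n+1}{2}}\right) \subseteq P$, which silently assumes two suprema are attained (the sup over points, and the sup over radii defining $\depth_P(y)$); strictly speaking, neither is guaranteed. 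Your version avoids both issues: you bound $\depth_P(x)$ pointwise by the uniform constant $c_n = \sqrt{\frac{n}{2} + \frac{5-5\rho}{12}}$ read off from the \emph{proof} of Lemma~\ref{lemma:X} (not just its statement), observe that $c_n < \sqrt{\frac{n+1}{2}}$ independently of $x$, and only then pass to the supremum. This is precisely the repair the paper's argument needs --- the strict containment $X \subset B\left(y,\sqrt{\frac{n+1}{2}}\right)$ in the lemma is what implicitly does this job there --- so your write-up is a more careful rendering of the same proof rather than a different one, and it correctly identifies why the bare statement of the lemma (without the uniform constant) would only yield the non-strict inequality.
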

\begin{proof}
Suppose, by contradiction, that
$\depth_P(\conv(P \cap \Z^n)) \geq \sqrt{\frac{n+1}{2}}$.
Then, there exists $y \in P \setminus \conv(P \cap \Z^n)$ such that
$\depth_P(y) \geq \sqrt{\frac{n+1}{2}}$,
i.e., $B \left(y, \sqrt{\frac{n+1}{2}} \right) \subseteq P$.
By Lemma~\ref{lemma:X}, we can construct a polytope $X$ with integral vertices
that satisfies
$y \in X \subset B \left(y, \sqrt{\frac{n+1}{2}} \right) \subseteq P$.
This implies that $y$ is a convex combination of integral points in $P$,
contradicting $y \notin \conv(P \cap \Z^n)$.
\end{proof}

\begin{corollary}
Let $n \geq 2$ and let $P \subseteq \R^n$ be a rational polyhedron
whose integer hull is nonempty.
We assume wlog that $0 \in P$ and
consider a basis $L \in \Z^{n \times d}$ of the lattice
$\Z^n \cap \aff(P)$.
The depth of the integer hull of $P$ is less than
$\sqrt{\lambda_d} \sqrt{\frac{d+1}{2}}$,
where $\lambda_d$ is the largest eigenvalue of $L^TL$.
\end{corollary}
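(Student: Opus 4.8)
The plan is to reduce this statement to the full-dimensional bound of Theorem~\ref{thm:depth-ub} by ``unfolding'' $\aff(P)$ onto $\R^d$ through the lattice basis $L$. Since $0 \in P$, the affine hull $\aff(P)$ is in fact a linear subspace $V$, and because $P$ is rational, $V$ is a rational subspace whose intersection with $\Z^n$ is a rank-$d$ lattice spanned over $\Z$ by the columns of $L$. Consequently the linear map $\bar x \mapsto L\bar x$ is an injection of $\R^d$ onto $V = \aff(P)$ that carries $\Z^d$ exactly onto $\Z^n \cap \aff(P)$. I would then work with the pulled-back set $P' := \{ \bar x \in \R^d : L\bar x \in P \}$, which is a full-dimensional rational polyhedron in $\R^d$ with nonempty integer hull.

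The first block of work is to check that $L$ induces a bijection between the integer points of the two sets, and hence between their integer hulls. For $\bar x \in \R^d$ we have $L\bar x \in \aff(P)$ automatically, so $L\bar x \in \Z^n$ iff $L\bar x \in \Z^n \cap \aff(P) = L\Z^d$ iff $\bar x \in \Z^d$, the last step using injectivity of $L$. Thus $L$ restricts to a bijection $P' \cap \Z^d \to P \cap \Z^n$, and since $L$ is linear it maps $\conv(P' \cap \Z^d)$ onto $\conv(P \cap \Z^n)$; in particular $y = L\bar y$ lies outside $\conv(P\cap\Z^n)$ exactly when $\bar y$ lies outside $\conv(P'\cap\Z^d)$.

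Next I would compare the two depths. The key estimate is that $L$ can only stretch Euclidean distances by a factor equal to its largest singular value, i.e.\ $\|L v\| \le \sqrt{\lambda_d}\,\|v\|$ for all $v \in \R^d$, where $\sqrt{\lambda_d}$ is the operator norm of $L$ (the square root of the largest eigenvalue of $L^T L$). Fix $y = L\bar y \in P \setminus \conv(P\cap\Z^n)$. For any $s < \depth_P(y)/\sqrt{\lambda_d}$, every $\bar z$ with $\|\bar z - \bar y\| \le s$ satisfies $\|L\bar z - y\| \le \sqrt{\lambda_d}\,s < \depth_P(y)$ and $L\bar z \in \aff(P)$, hence $L\bar z \in P$ and $\bar z \in P'$. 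Since $P'$ is full-dimensional, $\aff(P') = \R^d$, so this shows $\depth_{P'}(\bar y) \ge \depth_P(y)/\sqrt{\lambda_d}$, i.e.\ $\depth_P(y) \le \sqrt{\lambda_d}\,\depth_{P'}(\bar y)$. Taking the supremum over $y \in P \setminus \conv(P\cap\Z^n)$ and applying Theorem~\ref{thm:depth-ub} to $P'$ yields
\[
\depth_P(\conv(P\cap\Z^n)) \le \sqrt{\lambda_d}\,\depth_{P'}(\conv(P'\cap\Z^d)) < \sqrt{\lambda_d}\,\sqrt{\tfrac{d+1}{2}}.
\]

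The only real obstacle is bookkeeping around the non-full-dimensional depth definition: making sure the intersection with $\aff(P)$ in Definition~\ref{def:point-depth} is exactly what the map $L$ accounts for, together with justifying that $L$ genuinely spans $\aff(P)$ and surjects onto $\Z^n \cap \aff(P)$, which is precisely where rationality of $P$ is used. I would also dispatch the small cases separately: for $d = 0$ the depth is $0$, and for $d = 1$ Theorem~\ref{thm:depth-ub} does not literally apply, but a direct one-dimensional argument gives a bound of $\sqrt{\lambda_1}$, which matches $\sqrt{\lambda_d}\,\sqrt{(d+1)/2}$ at $d = 1$. For $d \ge 2$ the reduction above is the whole story.
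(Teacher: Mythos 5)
Your proposal is correct, and it reaches the bound by a route that is organized differently from the paper's. Both arguments hinge on the same key estimate—the largest singular value of $L$ controls distortion, $\norm{Lv} \le \sqrt{\lambda_d}\,\norm{v}$ since $v^T L^T L v \le \lambda_d \norm{v}^2$—but they deploy it in opposite directions. The paper works by \emph{pushing forward}: for $\tilde y = Ly \in P$ it invokes Lemma~\ref{lemma:X} in $\R^d$ to get a lattice polytope $X \ni y$, maps it to the lattice polytope $LX \subseteq \aff(P)$ containing $\tilde y$, bounds $\norm{\tilde y - \tilde x}$ for vertices $\tilde x$ of $LX$ by $\sqrt{\lambda_d}\sqrt{\frac{d+1}{2}}$, and then repeats the contradiction argument from the proof of Theorem~\ref{thm:depth-ub} directly in $\R^n$. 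You instead \emph{pull back}: you form $P' = \{\bar x \in \R^d : L\bar x \in P\}$, verify that $L$ gives a bijection between $P' \cap \Z^d$ and $P \cap \Z^n$ (hence between integer hulls), prove the depth-distortion inequality $\depth_P(\conv(P\cap\Z^n)) \le \sqrt{\lambda_d}\,\depth_{P'}(\conv(P'\cap\Z^d))$, and then cite Theorem~\ref{thm:depth-ub} as a black box for $P'$. What your version buys: it is more modular (the distortion inequality is a reusable fact about depth under injective linear maps, independent of Lemma~\ref{lemma:X}), it makes explicit the integer-hull correspondence that the paper leaves implicit in its closing sentence, and it dispatches the low-dimensional cases $d \in \{0,1\}$, which the paper's appeal to Lemma~\ref{lemma:X} (stated only for $n \ge 2$) silently skips. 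What the paper's version buys: it is shorter, requires no auxiliary polyhedron $P'$ or verification of its properties, and produces an explicit witness (the polytope $LX$) certifying that the deep point lies in the integer hull. One cosmetic point in your favor: the paper's displayed chain ends with the typo $\lambda_d \sqrt{\frac{d+1}{2}}$ where $\lambda_d \cdot \frac{d+1}{2}$ is meant (it bounds a squared distance); your formulation never squares the radius and so avoids the slip.
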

\begin{proof}
Since $P$ is rational, $\aff(P) = \{ Ly \, : \, y \in \R^d \}$,
so every point $\tilde y \in P$ can be expressed as $\tilde y = L y$
for some $y \in \R^d$.
By Lemma~\ref{lemma:X}, for every $y \in \R^d$, there exists
a lattice polytope $X \in \R^d$ such that
$y \in X \subset B\left(y, \sqrt{\frac{d+1}{2}}\right)$.
Observe that $\tilde y \in LX$
and that $LX \subset \R^n$ is also a lattice polytope.
Letting $\tilde x$ be a vertex of $LX$, we know that $\tilde x = L x$ for
some $x \in X$. We can now bound the distance between $\tilde x$ and $\tilde y$.
\begin{eqnarray*}
||\tilde y - \tilde x||^2
 & = & (\tilde y - \tilde x)^T (\tilde y - \tilde x) \\
 & = & (L y - L x)^T (L y - L x) \\
 & = & (L (y - x))^T (L (y - x)) \\
 & = & (y - x)^T L^T L (y - x) \\
 & \leq & \lambda_d ||y - x||^2 \; \leq \; \lambda_d \sqrt{\frac{d+1}{2}}.
\end{eqnarray*}
We can now apply to $\tilde y$ the same reasoning as in the proof of
Theorem~\ref{thm:depth-ub}.
\end{proof}

Whereas Theorem~\ref{thm:depth-ub} only provides an upper bound on the depth of integer hulls, we now construct a lower bound. Theorem~\ref{thm:depth-lb} shows that one can construct formulations whose integer hull has high depth.

\begin{theorem} For $n \geq 2$, there exists a full-dimensional
convex set $P \subseteq \R^n$ whose integer hull has depth
$\depth_P(\conv(P \cap \Z^n)) = \frac{\sqrt{3 + n}}{2}$.
\label{thm:depth-lb}
\end{theorem}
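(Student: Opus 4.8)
The plan is to realize $P$ as a single Euclidean ball positioned so that its only integer points form a flat facet of a unit cube, while the deepest part of $P$ lies just off the affine hull of that facet. Concretely, I would take $P$ to be the \emph{open} ball $B(z, R)$ with center $z = (\tfrac12, \dots, \tfrac12, 0)$ and radius $R = \frac{\sqrt{n+3}}{2}$. The point $z$ is the deep hole of the sublattice $\Z^{n-1} \times \{0\}$ lifted into $\R^n$, and the radius is chosen to be exactly the distance from $z$ to the two neighbouring integer layers $x_n = \pm 1$.

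First I would pin down $P \cap \Z^n$. Writing the squared distance from $z$ to an integer point $x$ as $\sum_{i<n}(x_i - \tfrac12)^2 + x_n^2$, each of the first $n-1$ terms is at least $\tfrac14$, with equality iff $x_i \in \{0,1\}$, and the last term is at least $0$, with equality iff $x_n = 0$. Hence the minimum squared distance is $\frac{n-1}{4} < R^2$, attained exactly on $\{0,1\}^{n-1} \times \{0\}$. The remaining integer points are at squared distance at least $\frac{n+7}{4} > R^2$ (if some $x_i \notin \{0,1\}$) or exactly $\frac{n+3}{4} = R^2$ (the layers $x_n = \pm 1$ with the other coordinates in $\{0,1\}$). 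Since the ball is open, these latter points lie on $\partial P$ and are excluded, so $P \cap \Z^n = \{0,1\}^{n-1}\times\{0\}$ and the integer hull $\conv(P\cap\Z^n)$ is the flat facet $[0,1]^{n-1}\times\{0\}$, contained in the hyperplane $x_n = 0$.

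Next I would establish the two bounds. The upper bound is immediate: for any $x \in P$ we have $\depth_P(x) = R - \|x - z\| \le R$, so $\depth_P(\conv(P\cap\Z^n)) \le R$. For the matching lower bound I would use the family $x_\eta := (\tfrac12, \dots, \tfrac12, \eta)$ for $\eta \in (0, R)$. Each $x_\eta$ lies in $P$ but not in the integer hull, since its last coordinate is nonzero while the hull sits in $\{x_n = 0\}$, and $\depth_P(x_\eta) = R - \eta$. Letting $\eta \to 0^+$ shows that the supremum defining $\depth_P(\conv(P\cap\Z^n))$ is at least $R$, hence equal to $R = \frac{\sqrt{n+3}}{2}$.

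The main subtlety is that this supremum is not attained at any single point, and this is exactly why an \emph{open} ball is needed. The natural candidate for the deepest exterior point, the deep hole $z$ itself, in fact lies \emph{inside} the integer hull because it sits on the hyperplane $x_n = 0$; only points approaching $z$ transversally are exterior, and their depths climb to $R$ only in the limit. If one used the closed ball instead, the $2^n$ integer points on the layers $x_n = \pm 1$, which sit precisely on $\partial P$, would be included, the integer hull would become full-dimensional and swallow $z$, and the depth would collapse. Thus the one place requiring genuine care is the verification that the ball is open and that the boundary layers are excluded; the rest reduces to the elementary distance computation above.
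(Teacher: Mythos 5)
Your proof is correct, and it takes a genuinely different route from the paper's. The paper constructs a translated polyhedral cone with $2^{n-1}$ facets: each facet is orthogonal to a direction $d^j = (1, t^j - \tfrac12 \bm{1})$ with $t^j \in \{0,1\}^{n-1}$, and cuts off the lattice point $c + d^j = (1, t^j)$ by a small margin $\varepsilon > 0$; the deep point sits at distance $\|d^j\| = \tfrac{\sqrt{n+3}}{2}$ from those cut-off points, and the claimed depth is reached only ``at the limit'' $\varepsilon \to 0^+$, i.e., via a family of polyhedra rather than a single set. You exploit the very same lattice configuration --- a deep hole of the layer $\Z^{n-1} \times \{0\}$ whose distance to the nearest off-layer lattice points is exactly $\tfrac{\sqrt{n+3}}{2}$ --- but realize it with a single open ball instead of a cone: openness excludes the boundary lattice points outright, so one fixed set attains the exact value, with the limiting process absorbed into the supremum of Definition~3 (your points $x_\eta$, $\eta \to 0^+$). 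This is arguably tighter on the equality claim than the paper's own argument, which for each fixed $\varepsilon$ only exhibits a point of depth $\tfrac{\sqrt{n+3}}{2} - O(\varepsilon)$ and does not explicitly verify a matching upper bound for its cone, whereas your bound $\depth_P(x) = R - \norm{x - z} \le R$ is immediate. What the paper's construction buys in exchange is polyhedrality (natural in a paper about cutting planes, and unattainable by your approach, since closing the ball destroys the example, as you note). One small hole in your case analysis of $P \cap \Z^n$: integer points with all $x_i \in \{0,1\}$ for $i < n$ but $|x_n| \ge 2$ fall into neither of your two stated cases; they are at squared distance $\tfrac{n-1}{4} + x_n^2 \ge \tfrac{n+15}{4} > R^2$, so the conclusion stands.
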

\begin{proof}
We define $T := \{ 0, 1 \}^{n-1}$ and let $T = \{ t^1, \ldots, t^{2^{n-1}} \}$.
We then define $D := \{ d^1, \ldots, d^{2^{n-1}} \} \subseteq \R^n$ where
$d^j = (1, t^j - \frac{1}{2} \bm{1})$.
Observe that $|d^1| = \cdots = |d^{2^{n-1}}|$.
We build the translated polyhedral cone $P$
\[
P := \{ x \in \R^n \; : \; {d^j}^T x \leq 1 + \frac{1}{2} |t^j|_1 - \varepsilon,
	\; \text{ for } j = 1, \ldots, 2^{n-1} \}
\]
where $|t^j|_1$ is the $L^1$ norm of $t^j$, or in this case the number of
ones in the binary vector $t^j$. The parameter $\varepsilon > 0$ can be
arbitrarily small. Finally, we give a point
$c := \left( 1, \frac{1}{2}, \ldots, \frac{1}{2}\right) \in \R^n$.
As illustrated in Figure~\ref{fig:depth-lb},
$P$ is constructed such that the $j$th inequality
is orthogonal to $d^j$ and almost touches but cuts off
$(1, t^j) = c + d^j$. All integer points in $x \in P \cap \Z^n$
satisfy $x_1 \leq 0$, so the point $x' = c + \varepsilon e_1$ does not
belong to $\conv(P \cap \Z^n)$. Therefore, the ball
$B^2(x', |d^1| - \varepsilon')$ is included in $P$ for some $\varepsilon' > 0$.
Note that $\varepsilon'$ can be chosen arbitrarily small as $\varepsilon$
tends towards zero.
At the limit, the depth of the integer hull of $P$ is
$|d^1| = \sqrt{1^2 + (n - 1) \left(\frac{1}{2}\right)^2}
	= \sqrt{\frac{3}{4} + \frac{n}{4}}
	= \frac{\sqrt{3 + n}}{2}$.
\begin{figure}
\centering
\includegraphics[height=7cm]{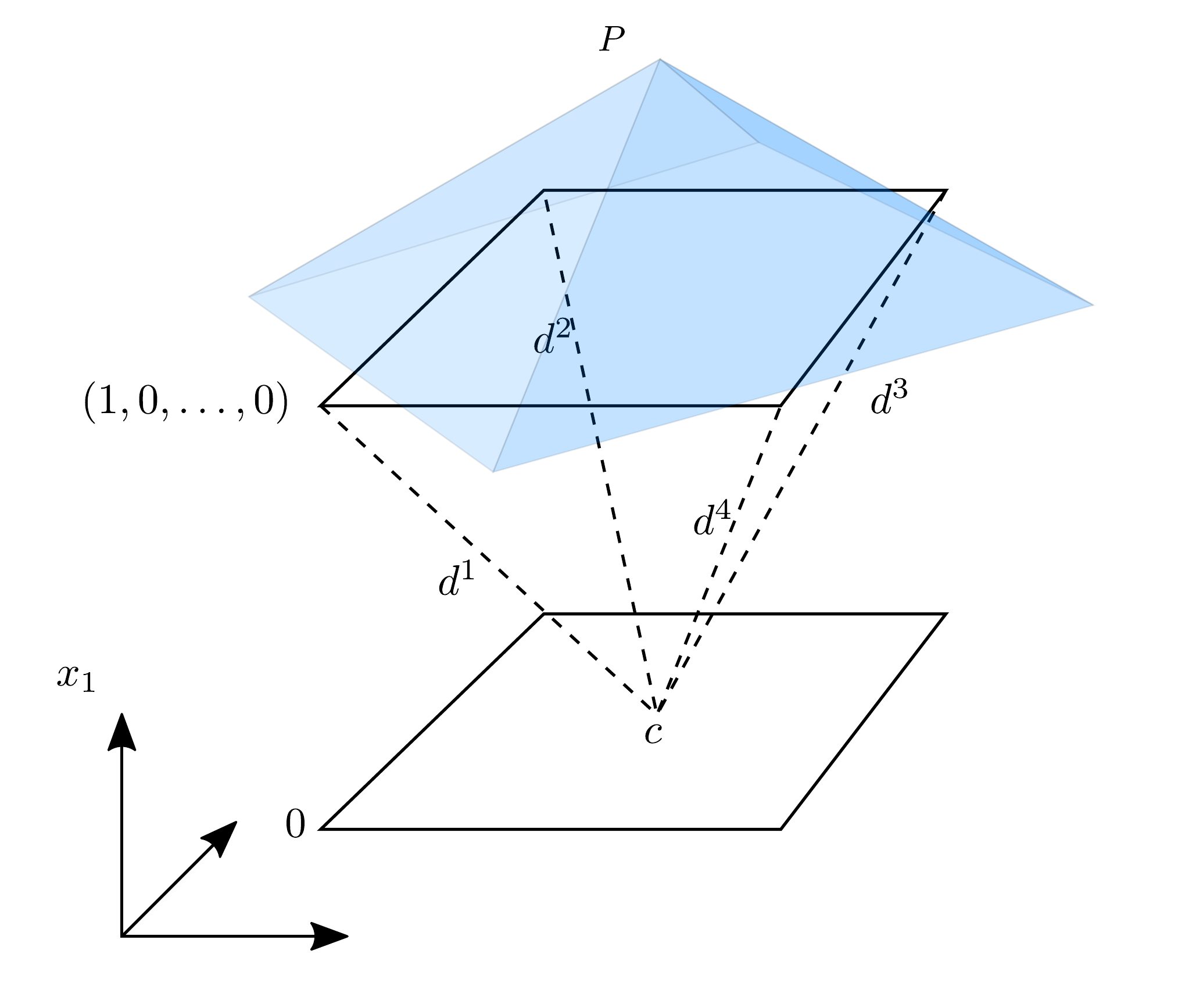} \\
\caption{The cone $P$ in the proof of Theorem~\ref{thm:depth-lb}.}
\label{fig:depth-lb}
\end{figure}
\end{proof}

\section{Bounds on the depth of cutting planes}
\label{sect:bounds}

\subsection{Split cuts}
\label{sect:split-bound}

It has long been understood that split cuts computed from a disjunction $(\pi, \pi_0)$ tend to be more effective computationally when the coefficients of $\pi$ are small. For example, this can be seen in the row aggregation heuristics of Fischetti and Salvagnin~\cite{FischettiSalvagnin2013}. Recently, Fukasawa, Poirrier and Yang~\cite{FukasawaPoirrierYang2018} performed computational tests and explicitly verified this hypothesis.
The intuition behind this behavior is that the distance between the two half-spaces $\pi^T x \leq \pi$ and $\pi^T x \geq \pi + 1$ decreases when $\pi$ grows. Our notion of depth lets us formalize this intuition.

\begin{theorem}\label{thm:cutboundary}
  Let $V$ be the affine hull of $P$ and let $\proj_{V}{y}$ denote the projection of $y \in \R^n$ onto $V$.
  Suppose $x \in P \setminus P^{(\pi, \pi_0)}$. If $\proj_V \pi = 0$, then $P^{(\pi, \pi_0)} = \emptyset$ and $P \cap \Z^n = \emptyset$.
  Otherwise, there exists a point $x' \in V \setminus P$ such that $\norm{x - x'} \le \frac{1}{\norm{\proj_V\pi}}$.
\end{theorem}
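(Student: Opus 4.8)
The plan is to reduce everything to a one-dimensional movement inside $V$ along the direction in which $\pi^T x$ changes fastest. Write $V = \bar x + V_0$, where $V_0$ is the linear subspace parallel to $V$, and let $p := \proj_V \pi$ be the component of $\pi$ lying in $V_0$, so that $\pi^T v = p^T v$ for every $v \in V_0$. I first dispose of the degenerate case $p = 0$: then $\pi^T$ is constant on all of $V \supseteq P$, say $\pi^T x = \gamma$ for every $x \in P$. Since $x \in P \setminus P^{(\pi, \pi_0)}$, and since both $\{ x \in P : \pi^T x \le \pi_0 \}$ and $\{ x \in P : \pi^T x \ge \pi_0 + 1 \}$ are contained in $P^{(\pi, \pi_0)}$, neither inequality can hold for $x$, which forces $\pi_0 < \gamma < \pi_0 + 1$. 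But then both of those sets are empty, so $P^{(\pi, \pi_0)} = \conv(\emptyset) = \emptyset$; moreover any $z \in P \cap \Z^n$ would yield the integer $\pi^T z = \gamma \in (\pi_0, \pi_0 + 1)$, which is impossible, so $P \cap \Z^n = \emptyset$.

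For the main case $p \neq 0$, I would first note that $x \in P \setminus P^{(\pi, \pi_0)}$ again forces $\pi_0 < \pi^T x < \pi_0 + 1$, by the same containment argument. I then move from $x$ along $\pm p / \norm{p}$, staying inside $V$ because $p \in V_0$. Using $\pi^T p = \norm{p}^2$, a displacement of length $t$ in the direction $p / \norm{p}$ increases $\pi^T x$ by exactly $t \norm{p}$. Hence the points $x^- := x - t^-\, p / \norm{p}$ with $t^- = (\pi^T x - \pi_0) / \norm{p}$, and $x^+ := x + t^+\, p / \norm{p}$ with $t^+ = (\pi_0 + 1 - \pi^T x) / \norm{p}$, satisfy $\pi^T x^- = \pi_0$ and $\pi^T x^+ = \pi_0 + 1$. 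Both lie in $V$, both $t^-$ and $t^+$ are strictly positive, and $t^- + t^+ = 1 / \norm{p}$, so $x$ lies on the segment $[x^-, x^+]$, whose total length is $1 / \norm{\proj_V \pi}$.

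The \emph{decisive step} is to show that at least one of $x^-, x^+$ lies outside $P$. If both were in $P$, then $x^-$ would belong to $\{ x \in P : \pi^T x \le \pi_0 \}$ and $x^+$ to $\{ x \in P : \pi^T x \ge \pi_0 + 1 \}$, hence both would lie in $P^{(\pi, \pi_0)}$; since $x$ is a convex combination of $x^-$ and $x^+$ and $P^{(\pi, \pi_0)}$ is convex, this would give $x \in P^{(\pi, \pi_0)}$, a contradiction. So one of them, call it $x'$, satisfies $x' \in V \setminus P$, and since $\norm{x - x'} \in \{ t^-, t^+ \}$ with $t^-, t^+ \le t^- + t^+ = 1 / \norm{\proj_V \pi}$, we obtain $\norm{x - x'} \le 1 / \norm{\proj_V \pi}$, as required. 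I do not expect a deep obstacle: the argument is elementary once the movement is set up inside $V_0$. The one point requiring care is the projection bookkeeping — computing the rate $\pi^T p = \norm{p}^2$ correctly, so that the segment length comes out as exactly $1 / \norm{\proj_V \pi}$ rather than $1 / \norm{\pi}$ — together with verifying in the degenerate case that the emptiness of both $P^{(\pi, \pi_0)}$ and $P \cap \Z^n$ follows from $\pi^T$ being constant on $P$.
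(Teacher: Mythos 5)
Your proof is correct and takes essentially the same route as the paper's: perturb $x$ along $\pm\proj_V\pi$ so that the two resulting points straddle the disjunction $\pi^T y \leq \pi_0$, $\pi^T y \geq \pi_0 + 1$, then invoke convexity of $P^{(\pi,\pi_0)}$ to conclude that at least one of them must lie outside $P$; the only cosmetic difference is that you take asymmetric steps landing exactly on the two hyperplanes (with total length $\frac{1}{\norm{\proj_V\pi}}$), whereas the paper takes two symmetric steps $\pm d$ with $d = \frac{1}{\norm{\proj_V\pi}^2}\proj_V\pi$, each of length exactly $\frac{1}{\norm{\proj_V\pi}}$. If anything, your write-up is slightly more complete, since the paper's proof silently omits the degenerate case $\proj_V\pi = 0$, which you verify explicitly (constancy of $\pi^T$ on $V$ forces both sides of the disjunction to be empty and excludes integer points in $P$).
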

\begin{proof}
  Observe that since $x \notin P^{(\pi, \pi_0)}$, we know that $\pi_0 = \floor{\pi^T x}$.
  We construct a vector $d \in V$ given by $d = \frac{1}{{\norm{\proj_V\pi}}^2}\proj_V\pi$.
  Then, we cannot have that both $x + d$ and $x - d$ are in $P$. Indeed,
    \begin{eqnarray*}
      \pi^T\left(x + d\right)
        & = & \pi^T x + \pi^T d \\
        & = & \pi^T x + \frac{\pi^T \proj_V\pi}{{\norm{\proj_V\pi}}^2} \\
        & = & \pi^T x + \frac{\left(\proj_V\pi + \proj_{V^\perp}\pi\right)^T \proj_V\pi}{{\norm{\proj_V\pi}}^2} \\
        & = & \pi^T x + \frac{\left(\proj_V\pi\right)^T\proj_V\pi}{{\norm{\proj_V\pi}}^2} \\
        & = & \pi^T x + 1 \\
        & \ge & \floor{\pi^T x} + 1 = \pi_0 + 1
    \end{eqnarray*}
  and similarly $\pi^T\left(x - d\right) \le \pi_0$.
  Thus if both $x + d$ and $x - d$ were in $P$, we would also have that $x-d, x+d \in P^{(\pi, \pi_0)}$,
  contradicting either $x \notin P^{(\pi, \pi_0)}$ or the convexity of $P^{(\pi, \pi_0)}$.
  Finally, we let $x'$ in the statement be one of $x-d$ or $x+d$ in order to satisfy $x' \notin P$,
  and verify that $||x - x'|| = ||d|| = \frac1{\norm{\proj_V\pi}}$.
\end{proof}
If we do not require $x'$ to lie in the affine hull of $P$, then Theorem~\ref{thm:cutboundary} simplifies into Corollary~\ref{cor:cutboundary}.
\begin{corollary}\label{cor:cutboundary}
  Let $P \subseteq \R^n$.
  If $x \in P \setminus P^{(\pi, \pi_0)}$, then there exists a point $x' \notin P$ such that $\norm{x - x'} \le \frac{1}{\norm{\pi}}$.
\end{corollary}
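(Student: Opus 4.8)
The plan is to derive Corollary~\ref{cor:cutboundary} directly from Theorem~\ref{thm:cutboundary} by observing that dropping the requirement $x' \in V$ lets us replace the projection $\proj_V\pi$ with the full vector $\pi$, which can only make the norm larger and hence the bound smaller. First I would note that if $x \in P \setminus P^{(\pi, \pi_0)}$, then in particular $P^{(\pi, \pi_0)} \neq \emptyset$, so by the contrapositive of the first case in Theorem~\ref{thm:cutboundary} we must have $\proj_V\pi \neq 0$, placing us squarely in the ``otherwise'' branch. This guarantees $\norm{\pi} \geq \norm{\proj_V\pi} > 0$, so the quantity $\frac{1}{\norm{\pi}}$ is well defined and satisfies $\frac{1}{\norm{\pi}} \leq \frac{1}{\norm{\proj_V\pi}}$.

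The natural approach is then to reuse the construction from the proof of Theorem~\ref{thm:cutboundary}, but with $d$ defined using $\pi$ in place of $\proj_V\pi$. Concretely, I would set $d := \frac{1}{\norm{\pi}^2}\pi$ and verify that $\pi^T(x+d) = \pi^T x + 1 \geq \pi_0 + 1$ and $\pi^T(x-d) \leq \pi_0$, exactly as before; here the computation is even simpler because $\pi^T d = \frac{\pi^T\pi}{\norm{\pi}^2} = 1$ without any need to split $\pi$ into its $V$ and $V^\perp$ components. As in Theorem~\ref{thm:cutboundary}, not both of $x+d$ and $x-d$ can lie in $P$, since otherwise both would lie in $P^{(\pi, \pi_0)}$, contradicting either $x \notin P^{(\pi, \pi_0)}$ or the convexity of $P^{(\pi, \pi_0)}$. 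I would then take $x'$ to be whichever of $x \pm d$ fails to be in $P$, and conclude $\norm{x - x'} = \norm{d} = \frac{1}{\norm{\pi}}$.

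Since the corollary statement only asks for $x' \notin P$ (and no longer $x' \in V$), this $d$-based construction suffices and the argument is essentially a verbatim repetition of the theorem's proof with $\proj_V\pi$ replaced by $\pi$. I do not anticipate a genuine obstacle: the only subtlety is confirming that we are not in the degenerate first case, which is handled by the observation that $x \in P \setminus P^{(\pi, \pi_0)}$ forces $P^{(\pi, \pi_0)}$ to be nonempty and hence $\proj_V\pi \neq 0$, guaranteeing $\pi \neq 0$ so the bound is meaningful. Alternatively, one could cite Theorem~\ref{thm:cutboundary} as a black box and simply remark that $\frac{1}{\norm{\pi}} \leq \frac{1}{\norm{\proj_V\pi}}$ if one is content to keep $x' \in V \subseteq \R^n$, but giving the direct construction is cleaner and yields the stated conclusion without fuss.
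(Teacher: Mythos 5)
Your central construction is correct and is exactly what the paper intends: the paper gives no separate proof of Corollary~\ref{cor:cutboundary}, stating only that Theorem~\ref{thm:cutboundary} ``simplifies into'' it once $x'$ is no longer required to lie in the affine hull, and your argument with $d := \pi/\norm{\pi}^2$ is that simplification (the facts $\pi_0 = \floor{\pi^T x}$ and the convexity contradiction go through verbatim, with no nonemptiness assumption on $P^{(\pi,\pi_0)}$). However, two side claims in your write-up are false. First, $x \in P \setminus P^{(\pi,\pi_0)}$ does \emph{not} imply $P^{(\pi,\pi_0)} \neq \emptyset$: when $P^{(\pi,\pi_0)} = \emptyset$ we have $P \setminus P^{(\pi,\pi_0)} = P$, so the hypothesis merely says $P \neq \emptyset$. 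Concretely, take $P = \{x \in \R^2 : x_2 = \tfrac{1}{2}\}$ and $(\pi,\pi_0) = (e_2, 0)$: then $P^{(\pi,\pi_0)} = \emptyset$, the hypothesis holds for every $x \in P$, and moreover $\proj_V\pi = 0$, so you are \emph{not} ``squarely in the otherwise branch.'' This slip does not hurt your proof, because your construction never uses $\proj_V\pi$; the only thing it needs is $\pi \neq 0$, and that is forced by the hypothesis for a different reason: if $\pi = 0$, then for every integer $\pi_0$ one side of the disjunction is trivially all of $P$, so $P^{(0,\pi_0)} = P$ and no such $x$ exists. (In the example above your construction still works: both $x+d$ and $x-d$ leave the affine hull of $P$, hence lie outside $P$.)

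Second, the ``alternative'' you offer, citing Theorem~\ref{thm:cutboundary} as a black box, does not prove the corollary. The theorem yields $\norm{x - x'} \le \frac{1}{\norm{\proj_V\pi}}$, and since $\norm{\proj_V\pi} \le \norm{\pi}$, that bound is \emph{weaker} (larger) than the claimed $\frac{1}{\norm{\pi}}$; the inequality $\frac{1}{\norm{\pi}} \le \frac{1}{\norm{\proj_V\pi}}$ points the wrong way for the deduction you sketch. The corollary trades the requirement $x' \in V$ for a strictly smaller radius, which is precisely why the construction must be redone with $d$ parallel to $\pi$ rather than to $\proj_V\pi$. So keep the direct construction as the proof, replace the justification that $\pi \neq 0$ with the one above, and drop both the ``otherwise branch'' remark and the black-box alternative.
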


The bounds in Theorem~\ref{thm:cutboundary} and Corollary~\ref{cor:cutboundary} can be interpreted as follows. For any disjunction $(\pi, \pi_0)$, all the points separated by split cuts from $(\pi, \pi_0)$ are within a distance of $\frac{1}{\norm \pi}$ from the boundary, and a distance of $\frac{1}{\norm{\proj_V\pi}}$ from the boundary within the affine hull of $P$.

\begin{corollary} Given $P \subseteq \R^n$,
\[
\depth_P\left(P^{(\pi, \pi_0)}\right) \leq \frac{1}{\norm{\proj_V\pi}}.
\]
Furthermore, if $P$ is full-dimensional, then
$\depth_P\left(P^{(\pi, \pi_0)}\right) \leq \frac{1}{\norm{\pi}}$.
Given $x \in P \setminus P^{(\pi, \pi_0)}$, we have
$
\depth_P(x) \leq
\frac{\max\{
	\pi^T x - \floor{\pi^T x},
	\ceil{\pi^T x} - \pi^T x \}}{\norm{\proj_V\pi}}
$ in the general case, and
$
\depth_P(x) \leq
\frac{\max\{
	\pi^T x - \floor{\pi^T x},
	\ceil{\pi^T x} - \pi^T x \}}{\norm \pi}
$
if $P$ is full-dimensional.
\label{cor:split-depth}
\end{corollary}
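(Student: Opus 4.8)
The plan is to derive all four bounds directly from Theorem~\ref{thm:cutboundary} and Corollary~\ref{cor:cutboundary}, exploiting the elementary fact that a single nearby point of $\aff(P) \setminus P$ already caps the depth of $x$. Precisely, recall from Definition~\ref{def:point-depth} that if $x' \in \aff(P)$ with $x' \notin P$, then for every $r > \norm{x - x'}$ the point $x'$ lies in $B(x, r) \cap \aff(P)$ but not in $P$, so $\depth_P(x) \le \norm{x - x'}$. I would first dispose of the degenerate case $\proj_V \pi = 0$: there $\frac{1}{\norm{\proj_V \pi}} = +\infty$ and the corresponding bounds hold vacuously (and Theorem~\ref{thm:cutboundary} tells us $P^{(\pi, \pi_0)} = \emptyset$).

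For the two set bounds, fix any $x \in P \setminus P^{(\pi, \pi_0)}$. Theorem~\ref{thm:cutboundary} furnishes a point $x' \in V \setminus P$ with $\norm{x - x'} \le \frac{1}{\norm{\proj_V \pi}}$, and since $V = \aff(P)$, the observation above gives $\depth_P(x) \le \frac{1}{\norm{\proj_V \pi}}$. Taking the supremum over $x \in P \setminus P^{(\pi, \pi_0)}$ and invoking Definition~\ref{def:set-depth} yields the first bound. When $P$ is full-dimensional, $\aff(P) = \R^n$, so every $x' \notin P$ automatically lies in $\aff(P)$; applying Corollary~\ref{cor:cutboundary} in the same fashion produces $\depth_P(x) \le \frac{1}{\norm{\pi}}$ and hence the second bound.

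The two pointwise bounds require sharpening the construction inside the proof of Theorem~\ref{thm:cutboundary}. With $d := \frac{1}{\norm{\proj_V \pi}^2}\proj_V \pi$, stepping by $\pm d$ changes $\pi^T x$ by $\pm 1$; instead I would step by the exact amounts that land on the two disjunctive hyperplanes. Setting $t := \ceil{\pi^T x} - \pi^T x$ and $s := \pi^T x - \floor{\pi^T x}$, define $x^+ := x + t d$ and $x^- := x - s d$, so that $\pi^T x^+ = \ceil{\pi^T x} = \pi_0 + 1$ and $\pi^T x^- = \floor{\pi^T x} = \pi_0$. Because $x \notin P^{(\pi, \pi_0)}$ forces $\pi^T x \notin \Z$, we have $s, t \in (0, 1)$ with $s + t = 1$, whence $x = s\, x^+ + t\, x^-$ is a proper convex combination. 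If both $x^+$ and $x^-$ lay in $P$, they would lie in $P^{(\pi, \pi_0)}$ (being on the correct sides of the disjunction), and convexity would place $x \in P^{(\pi, \pi_0)}$, a contradiction. Hence at least one of $x^\pm$ lies in $V \setminus P$, at distance $t \norm{d}$ or $s \norm{d}$ from $x$; since $\norm{d} = \frac{1}{\norm{\proj_V \pi}}$, the observation gives $\depth_P(x) \le \frac{\max\{s, t\}}{\norm{\proj_V \pi}}$, which is exactly the stated general pointwise bound. The full-dimensional version follows identically with $\proj_V \pi$ replaced by $\pi$ and $V = \R^n$.

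The one place demanding care is the convex-combination step: I must check that $s + t = 1$ (so the combination genuinely recovers $x$) and that whichever of $x^\pm$ lands outside $P$ still lies in $\aff(P)$ — both hold because $d \in V$ and $x \in P \subseteq V$. I should also note that the $\max$ in the numerator is the right conservative bound, since a priori we do not know which of $x^+, x^-$ is the point excluded from $P$, so the depth is capped by the larger of the two distances. The remaining arithmetic is routine.
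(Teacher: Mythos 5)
Your proposal is correct, and it follows exactly the route the paper intends: the paper states this corollary without proof, as an immediate consequence of Theorem~\ref{thm:cutboundary} and Corollary~\ref{cor:cutboundary}, and your first two bounds are precisely that derivation combined with the observation that a point of $\aff(P)\setminus P$ at distance $\delta$ from $x$ caps $\depth_P(x)$ at $\delta$. Your sharpened pointwise bounds --- rescaling the step $d$ by $s = \pi^T x - \floor{\pi^T x}$ and $t = \ceil{\pi^T x} - \pi^T x$ so that $x^\pm$ land exactly on the two disjunctive hyperplanes, then using $x = s\,x^+ + t\,x^-$ and convexity of $P^{(\pi,\pi_0)}$ --- are exactly the refinement of the theorem's proof that the $\max\{s,t\}$ numerator in the statement encodes, so nothing is missing.
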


This confirms that cuts with small $||\pi||$ are potentially deeper.
Moreover, Corollary~\ref{cor:split-depth} suggests that disjunction
directions $\pi$ that are close to orthogonal to the affine hull
of $P$ may be more beneficial.
We now show that the above bounds are tight.

\begin{proposition}
The bounds in Theorem~\ref{thm:cutboundary} and Corollary~\ref{cor:cutboundary} are tight.
\end{proposition}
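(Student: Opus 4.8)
The plan is to prove tightness by exhibiting, for every integer disjunction $(\pi, \pi_0)$, a family of instances on which the distance from a separated point to the nearest point outside $P$ converges to the claimed bound. Euclidean balls turn out to furnish the cleanest such family: for a ball, the depth of the center equals its radius exactly, so I only need to arrange that the center is separated by the split while the radius approaches $\frac{1}{\norm{\proj_V \pi}}$.

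Concretely, for Corollary~\ref{cor:cutboundary} I would take $V = \R^n$ and, after translating so that $\pi_0 = 0$, fix a small $\delta \in (0, \tfrac{1}{3}]$ and set $c := \frac{\delta}{\norm{\pi}^2}\pi$ together with $P := B\!\left(c, \frac{1 - 2\delta}{\norm{\pi}}\right)$. A direct computation gives $\pi^T c = \delta \in (0,1)$, while the extreme values of $\pi^T x$ over $P$ are $\pi^T c \pm (1 - 2\delta)$, so they lie in $[\,3\delta - 1,\; 1 - \delta\,]$. Hence the upper set $\{ x \in P : \pi^T x \ge 1 \}$ is empty, the lower set $\{ x \in P : \pi^T x \le 0 \}$ is nonempty, and therefore $P^{(\pi, \pi_0)}$ equals the lower set. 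Since $\pi^T c = \delta > 0$, the center lies in $P \setminus P^{(\pi, \pi_0)}$, and because $P$ is a ball we have $\depth_P(c) = \frac{1 - 2\delta}{\norm{\pi}}$. Letting $\delta \to 0^+$ drives this depth to $\frac{1}{\norm{\pi}}$, so no smaller constant can replace $\frac{1}{\norm{\pi}}$.

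For Theorem~\ref{thm:cutboundary} I would run the same construction inside a proper affine subspace $V$ with $\proj_V \pi \ne 0$, replacing $\pi$ by its projection wherever a normalization occurs: take $c = \frac{\delta}{\norm{\proj_V\pi}^2}\proj_V\pi$ and the $\dim(V)$-dimensional ball $P = B(c, r) \cap V$ with $r = \frac{1 - 2\delta}{\norm{\proj_V\pi}}$, so that $\aff(P) = V$. The identity $\pi^T u = \norm{\proj_V\pi}$ for the unit vector $u = \proj_V\pi / \norm{\proj_V\pi} \in V$ (which holds because $\proj_{V^\perp}\pi \perp u$) shows that $\pi^T$ grows across $V$ at rate exactly $\norm{\proj_V\pi}$, so the cap analysis above carries over verbatim with $\norm{\pi}$ replaced by $\norm{\proj_V\pi}$. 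As $\depth$ is measured inside $V$, the center again has depth $r \to \frac{1}{\norm{\proj_V\pi}}$.

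The one point requiring care — and the reason the bound is approached but never attained — is the verification that the center is genuinely separated. I would make this rigorous through the same mechanism as the proof of Theorem~\ref{thm:cutboundary}: if an inscribed ball of radius exactly $\frac{1}{\norm{\proj_V\pi}}$ were centered at a point $x$ with $\pi^T x$ strictly between $\pi_0$ and $\pi_0 + 1$, then its two poles $x \pm \frac{\proj_V\pi}{\norm{\proj_V\pi}^2}$ would satisfy $\pi^T(x \pm d) = \pi^T x \pm 1$ and thus fall into the lower and upper caps respectively, exhibiting $x$ as their midpoint and hence placing $x$ inside $\conv$ of the two caps, i.e. inside $P^{(\pi, \pi_0)}$. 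This straddling phenomenon is exactly what forces the strict inequality $r < \frac{1}{\norm{\proj_V\pi}}$ and is the main obstacle to a cleaner, equality-achieving example; once it is isolated, the remaining work is only the routine cap computations sketched above.
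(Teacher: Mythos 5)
Your construction is correct, but it is genuinely different from (and more general than) the paper's. The paper's proof takes $P$ to be the half-space $\{x \in \R^n : x_1 \geq \varepsilon/3\}$ with the disjunction $(e_1,0)$ and the point $x = (1-\varepsilon/3,0,\dots,0)$: the side $\pi^T x \leq 0$ of the split is empty inside $P$, so $P^{(e_1,0)}$ collapses to the single cap $\{x \in P : x_1 \geq 1\}$, and the separated point sits at distance $1 - 2\varepsilon/3 \to 1$ from the boundary. Your ball construction exploits exactly the same degeneracy in mirror image (one cap empty inside $P$, separated point just past the threshold of the live cap), but centering a ball at the separated point buys two things the paper's example does not: tightness for \emph{every} integer direction $\pi$ rather than only $\pi = e_1$, and tightness of the projected bound $\frac{1}{\norm{\proj_V\pi}}$ of Theorem~\ref{thm:cutboundary} when $V$ is a proper affine subspace and $\proj_{V^\perp}\pi \neq 0$ --- a case the paper sidesteps entirely by choosing an example where $V = \R^n$, so that the two bounds coincide. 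Two small remarks. First, your $P$ is a Euclidean ball, not a polyhedron; since Definitions~\ref{def:point-depth}--\ref{def:set-depth} and the proof of Theorem~\ref{thm:cutboundary} only use convexity this is harmless, but if one wants to stay in the paper's polyhedral setting, the slab $\{x : 3\delta - 1 \leq \pi^T x \leq 1 - \delta\}$ works verbatim: your point $c$ with $\pi^T c = \delta$ lies at distance $\frac{1-2\delta}{\norm{\pi}}$ from both bounding hyperplanes, and the same cap analysis applies. Second, your final paragraph is not needed as part of the proof: the separation of the center is already fully established by the cap computation ($\pi^T c = \delta > 0$ while $P^{(\pi,0)}$ equals the lower cap, the upper cap being empty); what that paragraph re-derives is Theorem~\ref{thm:cutboundary} itself, i.e., the reason no instance can attain the bound with equality, which is a useful observation but belongs to the discussion rather than the verification.
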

\begin{proof}
  Given any $0 < \varepsilon < 1$, consider the set $P = \{ x\in \R^{n} : x_1 \ge \frac{\varepsilon}{3} \}$,
  the disjunction $(e_1, 0) \in \Z^{n+1}$, and the point $x = \left(1-\frac{\varepsilon}3, 0\right) \in P$.
  It is easy to verify that $x \notin P^{(e_1, 0)}$.
  The affine hull of $P$ is $\R^n$ so Theorem~\ref{thm:cutboundary}
  and Corollary \ref{cor:cutboundary} describe the same bound $\frac{1}{\norm{e_1}} = 1$.
  However, the distance from $x$ to the boundary of $P$ is
    $$1-\frac{2\varepsilon}{3} > 1-\varepsilon = \frac{1}{\norm{e_1}}-\varepsilon.$$
  Letting $\varepsilon \rightarrow 0^+$ yields the claim.
\end{proof}

Note that our results automatically apply to Chvátal-Gomory cuts~\cite{Gomory1958,Gomory1963,Chvatal1973} as well, since Chvátal-Gomory cuts are exactly the split cuts for which one side of the disjunction is infeasible~\cite{BonamiCornuejolsDashFischettiLodi2008}.

\subsection{Intersection cuts}
\label{sect:ic-bound}

Another upper bound we can derive concerns the depth of intersection cuts~\cite{Balas1971} for continuous relaxations of corner polyhedra
(this is the most common setup for the practical generation of so-called
multirow cuts).
%~\cite{GomoryJohnson1972I,GomoryJohnson1972II}
Consider a feasible continuous corner
\[
P_I = \left\{ (x, s) \in \Z^m \times \R^n_+ : x = f + R s \right\},
\]
where $f \in \Q^m \setminus \Z^m$ and $R \in \Q^{m \times n}$.
The inequalities $s_j \geq 0$, and any conic combination of these inequalities, are trivially valid for $\conv(P_I)$.
Because $(R^j, e_j)$ is an extreme ray of $\conv(P_I)$ for every $j$,
any nontrivial valid inequality for $\conv(P_I)$ takes the form
$\alpha^T s \geq 1$ where $\alpha \geq 0$. Given any such nontrivial
valid inequality, we can immediately compute an upper bound on its depth,
by inspection.

\begin{theorem}
Let $\alpha^T s \geq 1$ be a valid inequality for $\conv(P_I)$.
Its depth satisfies
\[
\depth_P(\alpha, 1)
	\leq
	\min_{j}
	\left\{
	\frac{1}{\alpha_j} \sqrt{\left( \sum_i R_{ij}^2 \right) + 1}
	\; : \; \alpha_j > 0
	\right\}.
\]
\label{thm:ic-ub}
\end{theorem}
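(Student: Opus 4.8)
The plan is to exploit the very simple facet structure of the continuous relaxation $P = \{(x,s) \in \R^m \times \R^n : x = f + Rs,\ s \geq 0\}$. First I would record that $\aff(P) = \{(f + Rs,\, s) : s \in \R^n\}$ is the image of $\R^n$ under the injective affine map $s \mapsto (f + Rs,\, s)$, and that within $\aff(P)$ the only constraints cutting out $P$ are the nonnegativities $s_j \geq 0$. Consequently, the relative boundary of $P$ is the union of the facets $F_j := \{(f + Rs,\, s) \in \aff(P) : s_j = 0\}$, and $\depth_P(q)$ is exactly the radius of the largest ball centered at $q$ that, intersected with $\aff(P)$, crosses none of these facets.

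Next I would take an arbitrary separated point $q = (f + Rs,\, s) \in P$ with $\alpha^T s < 1$ and bound its depth for each index $j$ with $\alpha_j > 0$. Since $\alpha \geq 0$, every term of $\alpha^T s = \sum_k \alpha_k s_k$ is nonnegative, so $\alpha_j s_j \leq \alpha^T s < 1$, giving $s_j < \frac{1}{\alpha_j}$. The key geometric observation---this is the ``by inspection'' step---is that from $q$ we reach the facet $F_j$ by moving against the $j$th extreme ray direction $(R^j, e_j)$: the point $q' := (f + R(s - s_j e_j),\, s - s_j e_j)$ has its $j$th slack equal to zero and all other slacks unchanged, so $q' \in F_j \subseteq \partial P$ while still $q' \in \aff(P)$.

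I would then compute the displacement exactly, $q - q' = (s_j R^j,\, s_j e_j)$, whose Euclidean norm is $s_j \sqrt{\norm{R^j}^2 + 1} = s_j \sqrt{\left(\sum_i R_{ij}^2\right) + 1}$. Because $q'$ lies on the relative boundary of $P$ and in $\aff(P)$, any ball about $q$ of radius exceeding $\norm{q - q'}$ contains, along the segment from $q$ through $q'$, points with negative $j$th slack, which lie in $\aff(P) \setminus P$; hence $\depth_P(q) \leq \norm{q - q'} = s_j \sqrt{\left(\sum_i R_{ij}^2\right) + 1} < \frac{1}{\alpha_j}\sqrt{\left(\sum_i R_{ij}^2\right) + 1}$. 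Minimizing over all $j$ with $\alpha_j > 0$, and then taking the supremum over all separated points $q$ (which turns the strict inequality into a non-strict one), yields the claimed bound on $\depth_P(\alpha, 1)$.

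The routine parts are the norm computation and the orthant bookkeeping; the one place demanding care is the final inequality $\depth_P(q) \leq \norm{q - q'}$. I must verify that $q'$ is genuinely a boundary point in the relative topology of $\aff(P)$---which rests on the fact that the system defining $P$, restricted to $\aff(P)$, is precisely $s \geq 0$, with no hidden equalities beyond those captured by $x = f + Rs$---so that crossing $F_j$ really does leave $P$. I expect this to be the main (and only) subtlety, and it is dispatched once the facet description of the first paragraph is in place.
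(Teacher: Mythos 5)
Your proposal is correct and follows essentially the same argument as the paper: exploit $\alpha_j s_j \leq \alpha^T s < 1$, move from the separated point along the direction $-(R^j, e_j)$ to exit $P$ while staying in $\aff(P)$, and compute the displacement norm $\sqrt{\bigl(\sum_i R_{ij}^2\bigr) + 1}$ per unit of slack. The only (immaterial) difference is that you step by $s_j$ to the facet $s_j = 0$ and argue via points just beyond it, whereas the paper steps by $\frac{1}{\alpha_j}$ directly to an infeasible point; your version is in fact slightly more careful about the relative-boundary/affine-hull bookkeeping that the paper leaves implicit.
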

\begin{proof}
Let $(x, s)$ be any point that belongs to the LP relaxation of $P_I$,
i.e. $s \geq 0$, but that is cut off by $\alpha^T s \geq 1$.
For any index $j$ such that $\alpha_j > 0$, we have
$\alpha_j s_j \leq \alpha^T s < 1$, so $s_j < \frac{1}{\alpha_j}$.
We construct a point $(\bar x, \bar s)$ by setting
$\bar s = s - \frac{1}{\alpha_j} e_j$ and $\bar x = f + R \bar s$.
Since $\bar s_j < 0$, $(\bar x, \bar s)$ is LP infeasible.
The depth of the cut is thus upper bounded by the Euclidean distance $d$
between $(x, s)$ and $(\bar x, \bar s)$.
We have
\begin{eqnarray*}
d^2
 & = & \norm{ \vvec{ \bar x \\ \bar s } - \vvec{ x \\ s } }^2 \\
 & = & \norm{ \vvec{ R (\bar s - s) \\ \bar s - s } }^2 \\
 & = & \norm{ \vvec{ R \frac{1}{\alpha_j} e_j \\ \frac{1}{\alpha_j} e_j } }^2 \\
 & = & \frac{1}{\alpha_j^2} \norm{ \vvec{ R e_j \\ e_j } }^2. \\
\end{eqnarray*}
Therefore,
\[
\depth_P(\alpha, 1) \leq
	\frac{1}{\alpha_j}
	\sqrt{\left( \sum_i R_{ij}^2 \right) + 1}.
\]
\end{proof}

There are two interesting links between Theorem~\ref{thm:ic-ub} and the computational literature. First, note that the multipliers $\sqrt{\left( \sum_i R_{ij}^2 \right) + 1}$ coincide with the edge lengths in the steepest edge pricing~\cite{GoldfarbReid1977} for the primal simplex method\footnote{Steepest edge is by far the best practical pricing method to minimize the number of simplex iterations. For the dual simplex method, steepest edge pricing is also the fastest approach overall~\cite{ForrestGoldfarb1992}. In the primal case, it is balanced by higher per-iteration cost, and is often outperformed by faster, approximate versions of steepest edge.}. This is not surprising, since the edge length is the linear coefficient in the relationship between the increase of the value of one nonbasic variable and the Euclidean distance traveled. Still, it is noteworthy that those same edge lengths appear in cut depth computations as well. Second, setting aside the multipliers, we can observe that the tightest upper bound in the proof of Theorem~\ref{thm:ic-ub} is given by the largest value of $\alpha_j$. It thus seems natural that we should minimize the largest $\alpha_j$ coefficient in order to obtain a deep cut. This is exactly the \emph{infinity cut} approach of Fukasawa, Poirrier and Xavier~\cite{FukasawaPoirrierXavier2019}, who showed computationally that selecting a few cuts that minimize the largest $\alpha_j$ can yield about half the performance of selecting \emph{all} the cuts together. Theorem~\ref{thm:ic-ub} suggests that one could possibly improve on~\cite{FukasawaPoirrierXavier2019} by weighing the infinity norm measure using the primal steepest edge lengths.

\section{Computing the depth of a cutting plane}
\label{sect:algo}

\subsection{General polyhedra}

Consider a nonempty polyhedron $P$ described by
\[
P = \{ x \in \mathcal{L} \; : \; A x \leq b \},
\]
where $A \in \R^{m \times n}$, $\mathcal{L} \subseteq \R^n$ is an affine space,
and  $\dim(P) = \dim(\mathcal{L})$,
implying that $\mathcal{L}$ is the affine hull of $P$.
We denote by $a^i$ the $i$th row of $A$. We first characterize the depth of
a point.

\begin{proposition}
A point $\tilde x \in P$ has $\depth_P(\tilde x) \geq \delta$
if and only if, for every $i$, the distance between $\tilde x$
and the affine space $\{ x \in \mathcal{L} : {a^i}^T x = b_i \}$
is at least $\delta$.
\label{prop:point-depth}
\end{proposition}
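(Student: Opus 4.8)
The plan is to reduce the statement to a per-constraint computation and to show that, for each $i$, both quantities of interest are governed by the same scalar $\norm{\proj_W a^i}$, where $W$ denotes the direction space of $\mathcal{L}$ (so $\mathcal{L} = \tilde x + W$). First I would rewrite the depth condition as a closed-ball containment. Let $R := \{ r \ge 0 : B(\tilde x, r) \cap \mathcal{L} \subseteq P \}$. Since $B(\tilde x, r') \subseteq B(\tilde x, r)$ whenever $r' \le r$, the set $R$ is a downward-closed interval; and because $P$ is closed while $\mathcal{L}$ is affine (so every $y \in B(\tilde x, \delta) \cap \mathcal{L}$ is a limit of points $\tilde x + t(y - \tilde x) \in B(\tilde x, \delta) \cap \mathcal{L}$ with $t \uparrow 1$), $R$ contains its supremum. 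Hence $\depth_P(\tilde x) \ge \delta$ if and only if $\delta \in R$, i.e. $B(\tilde x, \delta) \cap \mathcal{L} \subseteq P$. As $P = \{ x \in \mathcal{L} : {a^i}^T x \le b_i \text{ for all } i \}$, this containment decouples over constraints: it holds if and only if, for every $i$,
\[
\max \{ {a^i}^T y : y \in \mathcal{L}, \; \norm{y - \tilde x} \le \delta \} \le b_i.
\]

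Next I would evaluate both sides using the projection $\proj_W a^i$. Writing $y = \tilde x + v$ with $v \in W$ and using the identity ${a^i}^T v = (\proj_W a^i)^T v$ for $v \in W$, the maximum above equals ${a^i}^T \tilde x + \delta \norm{\proj_W a^i}$, attained along $v \propto \proj_W a^i$; so the per-constraint condition becomes $\delta \norm{\proj_W a^i} \le b_i - {a^i}^T \tilde x$. On the other hand, the distance from $\tilde x$ to $H_i := \{ x \in \mathcal{L} : {a^i}^T x = b_i \}$ is the minimum of $\norm{v}$ over $v \in W$ satisfying $(\proj_W a^i)^T v = b_i - {a^i}^T \tilde x$; the minimum-norm solution of this single linear equation inside the subspace $W$ has norm $(b_i - {a^i}^T \tilde x)/\norm{\proj_W a^i}$, which is nonnegative since $\tilde x \in P$. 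Comparing, ``the distance from $\tilde x$ to $H_i$ is at least $\delta$'' is exactly $\delta \norm{\proj_W a^i} \le b_i - {a^i}^T \tilde x$, matching the per-constraint condition; taking the conjunction over all $i$ yields the proposition.

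The one place that needs care — and the main obstacle — is the degenerate case $\proj_W a^i = 0$, i.e. a constraint whose normal is orthogonal to $\aff(P)$. Then ${a^i}^T x$ is constant on $\mathcal{L}$, equal to ${a^i}^T \tilde x \le b_i$, so this constraint never binds and places no restriction on the maximum above; correspondingly $H_i = \emptyset$ with distance $+\infty$, so ``distance $\ge \delta$'' holds vacuously and the equivalence is preserved. I would note that the remaining sub-case, where such a constraint is tight on all of $\mathcal{L}$, forces $\aff(P) \subseteq \{ {a^i}^T x = b_i \}$ and should be absorbed into the description of $\mathcal{L}$; it is ruled out by taking $\mathcal{L}$ to be the affine hull and the rows of $A$ to be the genuine inequality constraints. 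Apart from this bookkeeping, the proof is the two short computations above, both turning on the identity ${a^i}^T v = (\proj_W a^i)^T v$ for $v \in W$.
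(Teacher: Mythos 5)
Your proof is correct and follows the same skeleton as the paper's: reduce $\depth_P(\tilde x) \geq \delta$ to the containment $B(\tilde x, \delta) \cap \mathcal{L} \subseteq P$, decouple that containment constraint by constraint, and identify each per-constraint condition with the distance from $\tilde x$ to $\{ x \in \mathcal{L} : {a^i}^T x = b_i \}$. The differences are in execution, and they buy you something. The paper asserts the per-constraint equivalence as geometrically evident and proves the forward direction by contrapositive (exhibiting a violating point inside the ball), which lets it avoid the question of whether the supremum in Definition~\ref{def:point-depth} is attained; your route reduces directly to closed-ball containment, so the attainment argument you give (downward-closed interval of radii, $P$ closed) is genuinely needed, and you then make both sides of the equivalence quantitative via the identity ${a^i}^T v = (\proj_W a^i)^T v$ for $v$ in the direction space $W$ of $\mathcal{L}$. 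More substantively, your degenerate-case analysis is not mere bookkeeping: if some row has $\proj_W a^i = 0$ with ${a^i}^T x = b_i$ on all of $\mathcal{L}$ (an implicit equality not absorbed into $\mathcal{L}$), the proposition as literally stated is false --- take $\mathcal{L} = \{ x \in \R^2 : x_2 = 0 \}$, $a^1 = (0,1)^T$, $b_1 = 0$: then $P = \mathcal{L}$, every point of $P$ has infinite depth, yet its distance to $\{ x \in \mathcal{L} : {a^1}^T x = b_1 \} = \mathcal{L}$ is zero. The paper's hypotheses ($\dim(P) = \dim(\mathcal{L})$) do not exclude this, its proof of the forward direction silently fails there (no violating point exists), and the assumption is only implicit later, when $t^i$ is defined by normalizing the projection of $a^i$ onto $\mathcal{L}$. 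Your explicit requirement that the rows of $A$ be genuine inequalities on $\mathcal{L}$ (together with the distance-$+\infty$ convention when the hyperplane slice is empty) is exactly the repair the statement needs.
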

\begin{proof}
($\Leftarrow$): If the aforementioned distance is at least $\delta$, then
$(B(\tilde x, \delta) \cap \mathcal{L})
	\subseteq \{ x \in \mathcal{L} : {a^i}^T x \leq b_i \}$.
If this is true for all $i$,
we have $(B(\tilde x, \delta) \cap \mathcal{L}) \subseteq P$, hence
$\depth_P(\tilde x) \geq \delta$. ($\Rightarrow$):
Conversely, if the distance is lower than $\delta$ for some $i$, then
there exists a point $z \in (B(\tilde x, \delta) \cap \mathcal{L})$ such that
${a^i}^T x > b_i$. Therefore, $z \notin P$, so $\depth_P(\tilde x) < \delta$.
\end{proof}

Let us denote by $t^i \in \R^n$ the vector that satisfies
(i) $||t^i|| = 1$,
(ii) $t^i \in L$, and
(iii) $\{ x \in \mathcal{L} : {a^i}^T x \leq b_i \}
	= \{ x \in \mathcal{L} : {t^i}^T x \leq h_i \}$
for some $h_i$.
Observe that $t^i$ is orthogonal to
$\{ x \in \mathcal{L} : {a^i}^T x = b_i \}$, and
can be obtained by projecting $a^i$ on $\mathcal{L}$, then normalizing.
Letting $T \in \R^{m \times n}$ be the matrix whose $i$th row is $t^i$
for all $i$, we can rewrite $P$ as
$P = \{ x \in \mathcal{L} \; : \; T x \leq h \}$.

Given $t^i$ and $h_i$, the distance between $\tilde x \in P$ and
$\{ x \in \mathcal{L} : {a^i}^T x = b_i \}$ can be expressed as
\begin{eqnarray}
\max \{ \lambda \geq 0 \; : \; {t^i}^T (\tilde x + \lambda t^i) \leq h_i \}
& = & \max \{ \lambda \geq 0 \; : \; {t^i}^T \tilde x + \lambda \leq h_i \}
	\label{eq:x-dist-max} \\
& = & h_i - {t^i}^T \tilde x.
	\label{eq:x-dist-cf}
\end{eqnarray}
By Proposition~\ref{prop:point-depth}, $\depth_P(\tilde x)$ is
the minimum such distance, yielding
\begin{eqnarray}
\depth_P(\tilde x)
& = & \max \{ \lambda \geq 0 \; : \; T \tilde x - 1.\lambda \leq h \}
	\label{eq:x-depth-max} \\
& = & \min_i \{ h_i - {t^i}^T \tilde x \}.
	\label{eq:x-depth-cf}
\end{eqnarray}
It follows from~\eqref{eq:x-depth-max} that for $\lambda \geq 0$, the set
$P(\lambda) := \{ x \in \mathcal{L} \, : \, T x + 1.\lambda \leq h \}$
is the set of all points of $P$ that have depth at most $\lambda$.

We now tackle the depth of a cut.
Given a cutting plane $\alpha^T x \geq \beta$, the closure of the set of points that are cut off from $P$ by that cutting plane is given by the polyhedron $\{ x \in P \, : \, \alpha^T x \leq \beta \}$. The depth of that cutting plane is the largest $\lambda \geq 0$ such that the set
\begin{equation}
\{ x \in P(\lambda) \, : \, \alpha^T x \leq \beta \}
\label{eq:depth-set}
\end{equation}
is feasible.
% i.e.
% \depth_P(\alpha, \beta) =
% \max_{ (\lambda, x) \in \R \times \R^n } \{ \lambda \;
% 	: \; x \in P(\lambda) \text{ and } \alpha^T x \leq \beta \}.
We thus find $\depth_P(\alpha, \beta)$ by solving the linear optimization problem
\begin{equation}
\depth_P(\alpha, \beta) =
\begin{array}[t]{rl@{\;}l}
\max & \lambda \\
\st  & T x + 1 . \lambda & \leq h \\
     & \alpha^T x & \leq \beta \\
     & \multicolumn{2}{l}{x \in \mathcal{L}, \lambda \geq 0.}
\end{array}
\label{eq:depth-lp}
\end{equation}

\subsection{Corner polyhedra}

Consider a corner polyhedon~\cite{GomoryJohnson1972I,GomoryJohnson1972II},
for example $\{ (x, s) \in \Z^m \times \Z^n_+ \, : \, x = f + R s \}$,
and let $P$ be its LP relaxation. The polyhedron $P$ can be obtained
by applying an affine transformation to an orthant (that is not
necessarily full-dimensional). Therefore, $P$
is a simple pointed cone, i.e., it has a vertex $v$ and can be expressed
as $v$ plus a conic combination of its $\dim(P)$ extreme rays.
Again, we let $P = \{ x \in \mathcal{L} \, : \, A x \leq b \}$,
where $\mathcal{L}$ is the affine hull of $P$.
Furthermore, we assume wlog that $A$ contains no redundant
inequalities. Then, the vertex of $P$ is
$v := \{ x \in \mathcal{L} \, : \, A x = b \}$.
Let $L \in \R^{(n - \dim(\mathcal{L})) \times n}$ be a full row rank matrix
such that $\mathcal{L} = \{ x \in \R^n \, : \, L x = \xi \}$
for some $\xi \in \R^{(n - \dim(\mathcal{L}))}$.
Using the matrix $T$ defined above, the vertex of $P$ can be expressed as
\[
v = \vmat{c}{T \\ \hline L }^{-1} \vvec{h \\ \hline \xi}.
\]
Similarly, the vertex $v(\lambda)$ of $P(\lambda)$ is given by
\begin{equation}
v(\lambda) = \vmat{c}{T \\ \hline L }^{-1}
	\left( \vvec{h \\ \hline \xi} - \lambda \vvec{ 1 \\ \hline 0 } \right)
	= v + \lambda q,
	\quad \text{ where }
	q := -\vmat{c}{T \\ \hline L }^{-1} \vvec{ 1.\lambda \\ \hline 0 }.
\label{eq:v-lambda-q}
\end{equation}
Since changing $\lambda$ effectively amounts to changing the right-hand sides
in the formulation of $P(\lambda)$, it does not affect its recession cone.
In other words, letting $C$ be the recession cone of $P$, we have
$P(\lambda) = \{ v(\lambda) \} + C$, for all $\lambda \geq 0$.
Note that since $P(\lambda) \subseteq P$, we know that $q \in C$.

If there exists a ray $r \in C$ of $P$ such that $\alpha^T r < \beta$,
then the set~\eqref{eq:depth-set} is feasible for all values of $\lambda$,
and the optimization problem~\eqref{eq:depth-lp} is unbounded
%and the depth of the cutting plane is unbounded
(i.e., either $P \cap \Z^n$ is empty, or the cut is invalid).
Otherwise,~\eqref{eq:depth-set} is feasible if and only if $v(\lambda)$
is feasible, and~\eqref{eq:depth-lp}
becomes $\max \{ \lambda \geq 0 \, : \, \alpha^T v(\lambda) \leq \beta \}$,
i.e.
\begin{equation}
\max \{ \lambda \geq 0 \; : \; \alpha^T (v + \lambda q) \leq \beta \}.
\label{eq:depth-corner-opt}
\end{equation}
We assume that $\alpha^T v \leq \beta$
(otherwise the cut is either nonviolated, or invalid)
and that $\alpha^T q > \beta$
(otherwise the depth is unbounded again:
either $P \cap \Z^n$ is empty, or the cut is invalid).
We can thus solve~\eqref{eq:depth-corner-opt} in closed form,
finding $\lambda$ such that
$\alpha^T v + \lambda \alpha^T q = \beta$, i.e.
\begin{equation}
\depth_P(\alpha, \beta) = \frac{\beta - \alpha^T v}{\alpha^T q}.
\label{eq:depth-corner-cf}
\end{equation}

\subsection{Computing depths in practice}

For each cut, the cost of computing its depth is that of solving the
linear optimization problem~\eqref{eq:depth-lp} (or, in the case of
corner polyhedra, computing $q$ from the linear system~\eqref{eq:v-lambda-q}).
In standard \emph{inequality} form,
the problem~\eqref{eq:depth-lp} has the same dimension as the original
formulation, plus one row and one column.
Warm start can be exploited
if we are to compute the depth of multiple cuts, since only the
constraint $\alpha^T x \leq \beta$ will change.

Note that in practice, mixed-integer programming solvers work
in standard \emph{equality} form with general upper and lower bounds,
in part because it is what implementations of the simplex method expect.
A formulation would thus be
$\{ x \in P \, : \, x_j \in \Z, \text{ for } j \in J \}$, where
\[
P = \{ x \in \R^n \; : \; L x = \xi, \; \ell \leq x \leq u \}.
\]
Let the constraint matrix $L \in \R^{p \times n}$ be full row rank\footnote{This assumption generally holds true in practice, because presolve attempts to remove redundant rows. Furthermore, some LP solvers assume that the constraint matrix contains an identity, whose columns (possibly fixed to zero by $\ell$ and $u$) are used as slacks, primal phase-I artificial variables, and replacement columns for basis repair.}. The affine hull of $P$ is $\mathcal{L} = \{ x \in \R^n \, : \, L x = \xi \}$ and the $2n$ inequalities are $e_j^T x \geq \ell_j$ and $e_j^T x \leq u_j$ for $j = 1, \ldots, n$.
As we will see below, this makes problem~\eqref{eq:depth-lp} in standard equality form substantially larger than the original formulation.

We now account for the fixed cost of constructing the matrix $T$.
Recall that every row $t^i$ of $T$ is
the normalized projection of $a^i$ on $\mathcal{L}$,
where $a^i$ is a normal vector to the $i$th inequality constraint
(pointing towards infeasibility).
In standard equality form, this means computing the projection of $e_j$ (and $-e_j$) onto the vector subspace $\{ x \in \R^n : Lx = 0 \}$. One approach is adding to $e_j$ a linear combination of the rows of $L$, such that the result is the desired projection. Letting that projection be $e_j + L^T \mu^j$, we need to find $\mu^j \in \R^p$ such that $L (e_j + L^T \mu^j) = 0$. We thus solve the linear system
\[
L L^T \mu^j = -L_j,
\]
where $L_j$ is the $j$th column of $L$. It should be noted that $LL^T$ is a $p \times p$ positive definite matrix, since the rows of $L$ are linearly independent, allowing a Cholesky factorization. The element $(LL^T)_{ik}$ will be zero whenever the $i$th and $k$th rows of the constraint matrix $L$ are orthogonal. Thus, when $L$ is sparse, $LL^T$ should be somewhat sparse as well. From a practical perspective, solving $n$ such systems is a nontrivial computation, but it can be expected to take a fraction of the time necessary to solve the root node LP relaxation\footnote{The simplex method requires solving 2 to 4 unsymmetric linear systems of the same size, per iteration. It is often reasonable to expect about $n$ iterations in practice, although the variance is high and the worst case is, of course, exponential in $n$.}.
Once the $\mu^j$ vectors are computed,
we let ${\gamma^j} := e_j + L^T \mu^j$,
and the bound constraints can be reformulated as
\[
\ell_j + \xi^T \mu^j \leq {\gamma^j}^T x \leq u_j + \xi^T \mu^j.
\]
After normalizing, we obtain
\[
\frac{\ell_j + \xi^T \mu^j}{||{\gamma^j}||}
	\leq \frac{{\gamma^j}^T x}{||{\gamma^j}||}
	\leq \frac{u_j + \xi^T \mu^j}{||{\gamma^j}||},
\]
so the constraints corresponding to $Tx + 1.\lambda \leq h$
in~\eqref{eq:depth-lp} will be
\begin{equation}
\frac{\ell_j + \xi^T \mu^j}{||{\gamma^j}||} + \lambda
	\leq \frac{{\gamma^j}^T x}{||{\gamma^j}||}
	\leq \frac{u_j + \xi^T \mu^j}{||{\gamma^j}||} - \lambda.
\label{eq:std-T}
\end{equation}
Using constraints~\eqref{eq:std-T} in~\eqref{eq:depth-lp} would yield
a standard equality form problem with $2n$ additional linear constraints,
and $2n$ additional slacks, compared to the formulation of $P$.
Furthermore, these new constraints
can be expected to be denser than the original constraints.
This can be partially mitigated by multiplying~\eqref{eq:std-T} by the norm of
the projected vector
\[
\ell_j + \xi^T \mu^j + ||{\gamma^j}|| \lambda
	\leq {\gamma^j}^T x
	\leq u_j + \xi^T \mu^j - ||{\gamma^j}|| \lambda
\]
then subtracting ${\mu^j}^T L x = {\mu^j}^T \xi$
\[
\ell_j + ||{\gamma^j}|| \lambda
	\leq x_j
	\leq u_j - ||{\gamma^j}|| \lambda,
\]
yielding the optimization problem
\begin{equation}
\begin{array}{rll}
\min & \lambda & \\
\st
& \displaystyle x_j - ||{\gamma^j}|| \lambda - y_j = \ell_j
	& \text{for all } j = 1, \ldots, n \\
& \displaystyle x_j + ||{\gamma^j}|| \lambda + z_j = u_j
	& \text{for all } j = 1, \ldots, n \\
& L^i x = \xi_i
	& \text{for all } i = 1, \ldots, p \\
& x \in \R^n, y \in \R^n_+, z \in \R^n_+, \lambda \in \R_+ & \\
\end{array}
\label{eq:std-scaled}
\end{equation}
Like one that would use~\eqref{eq:std-T},
the linear problem~\eqref{eq:std-scaled}
has $p + 2n$ constraints and $3n + 1$ variables.
In~\eqref{eq:std-scaled} however, the added rows are very sparse,
and the original constraints are left untouched,
avoiding numerical errors in the formulation.

\section{Conclusion}
\label{sect:conclusion}

We propose a new measure for the strength of cutting planes,
which we call \emph{depth}.
It is strictly positive for violated inequalities and bounded
above by a function of the dimension and the integer lattice.
We argue (i) that it is a useful theoretical tool, one which can
help us explain computational results observed previously,
(ii) that it should be a good proxy for the computational usefulness
of individual cuts in a branch-and-cut framework,
and as such could be used to help tackle the problem of cut selection,
and (iii) that it can be computed, or at least approximated, at
a reasonable computational cost.
The concept of depth also raises a few questions that we did not address
here.

Regarding full-dimensional integer hull depth,
there is a gap of a factor $\sqrt{2}$ between the upper bound
in Theorem~\ref{thm:depth-ub} and the lower bound in Theorem~\ref{thm:depth-lb}.
In particular, finding the tightest possible bound for
Lemma~\ref{lemma:X} seems like a very simple and elegant
problem, to which we do not have a solution.
We conjecture that Theorem~\ref{thm:depth-lb} is tight
up to an additive constant,
and that Lemma~\ref{lemma:X}/Theorem~\ref{thm:depth-ub} can be strengthened.
Our intuition is motivated by the following:
Let $(\bar x, \bar y)$ be pair of points constructed in~\eqref{eq:depth-ub-cons}
that achieves maximum distance for
the optimiziation problem~\eqref{eq:depth-ub-opt}.
Then, let $X'$ be the convex hull of all the vertices of $X$ except $\bar x$.
It turns out that $\bar y \in X'$.
It is thus possible that there exists a construction for $X$ that
features a smaller ball radius, and still contains $\bar y$.

It would be interesting to have a priori upper bounds on families of cuts
beyond split cuts. We provide a rough bound for intersection cuts in
Section~\ref{sect:ic-bound}, but it can be evaluated only after a cut
is computed. A bound that is parametrized on the lattice-free set would
be more useful both theoretically (to compare different types of lattice-free
sets) and computationally (to select better lattice-free sets).

It is easy to show that the
depth of a cutting plane with respect to a relaxation is an upper bound
on its depth with respect to the original formulation.
We can thus already approximate the depth -- at a much lower computational
cost -- by considering corner relaxations and using~\eqref{eq:depth-corner-cf}.
While the depth of a cut is \emph{not} the minimum of its depths with
respect to all corners,
it may still be possible to get tighter approximations by using multiple bases.
% One obstacle to this is that an inequality can be valid for a
% formulation and simultaneously invalid for some relaxation, yielding
% an unbounded depth with respect to that relaxation.
In the case of strict corners (which can have more facets than dimensions),
$P(\lambda)$ may not be a cone for $\lambda > 0$. However, there could
still be an easy way to compute cut depth.

Finally, computational experiments are required to evaluate the adequacy of
depth as comparator for cut selection. Such experiments would require particular
care, since we would need to measure IP solver performance, which
is notoriously sensitive to small perturbations.

\bibliographystyle{plain}
\bibliography{polyhedra}

\end{document}